\documentclass{conm-p-l}
\usepackage{amsfonts, amssymb, amsmath, amsthm, euscript}

\swapnumbers
\newtheorem{theorem}{Theorem}[section]

\newtheorem{lem}[theorem]{Lemma}
\newtheorem{prop}[theorem]{Proposition}
\newtheorem{cor}[theorem]{Corollary}
\newtheorem*{thmA*}{Theorem A}
\newtheorem*{thmB*}{Theorem B}

\theoremstyle{definition}

\newtheorem{exam}[theorem]{Example}

\newtheorem{notation}[theorem]{Notation and Conventions}
\newtheorem{organization}[theorem]{Organization}

\newtheorem*{qn*}{Question}

\theoremstyle{remark}
\newtheorem{rem}[theorem]{Remark}

\numberwithin{equation}{section}

\newcommand{\FF}{\mathbb{F}}

\newcommand{\QQ}{\mathbb{Q}}

\newcommand{\ZZ}{\mathbb{Z}}

\newcommand \calF {{\mathcal{F}}}
\newcommand \calO {{\mathcal{O}}}
\newcommand \calS {{\mathcal{S}}}

\newcommand \calW {{\mathcal{W}}}

\newcommand\gfrak{\mathfrak{g}}
\newcommand\hfrak{\mathfrak{h}}

\newcommand\bfq{\boldsymbol{q}}

\DeclareMathOperator \Fract { {\mathrm{Fract}} }
\DeclareMathOperator{\ch}{char}
\DeclareMathOperator{\Id}{Id}
\DeclareMathOperator{\card}{card}

\newcommand \Zpos {\ZZ_{> 0}}

\newcommand{\kx}{k^\times}
\DeclareMathOperator{\op}{op}
\newcommand\Obfq{\calO_{\bfq}}

\begin{document}

\title[Nonaffine skew fields]{Skew fields not
finitely generated as algebras}

\author{K. R. Goodearl and E. S. Letzter}

\address{Department of Mathematics\\ University of California \\Santa
  Barbara, CA 93106}

\email{goodearl@math.ucsb.edu}

\address{Department of Mathematics\\
        Temple University \\ Philadelphia, PA 19122}
      
      \email{letzter@temple.edu}

\begin{abstract} An associative division algebra $D$ is said to be \emph{affine} over a central subfield $k$ if $D$ is finitely generated as a $k$-algebra. In 1956 Amitsur famously proved that, when $k$ is uncountable, $D$ cannot be $k$-affine unless $D$ is algebraic over $k$. In this paper we consider affineness -- and nonaffineness -- for certain naturally occurring classes of division algebras over arbitrary fields. The primary applications are to division algebras of fractions of suitably conditioned iterated skew polynomial rings over $k$, including many examples naturally arising in Lie theoretic and quantum group settings.  Many transcendental division algebras are thus verified to be nonaffine over $k$.  Division algebras of fractions of Weyl algebras and quantum affine spaces are determined to be affine over their centers exactly when they are finite dimensional over their centers.
\end{abstract}

\keywords{Skew field, division algebra, affine algebra}

\subjclass[2020]{Primary: 16K40, 16S15. Secondary: 16E60, 16P40.}

\dedicatory{
The second author dedicates this paper to the memories of his daughter Eliana \\ Risman Letzter (1995--2024) and sister Dr.~Gail Rebecca  Letzter  (1960--2024).}
 

\maketitle 


\section{Introduction}

For the general class of (associative) division algebras $D$ over a central subfield $k$, the question of whether or not $D$ can be both infinite $k$-dimensional and $k$-aff\-ine (i.e., finitely generated as a $k$-algebra) remains one of the oldest unsolved problems in noncommutative algebra. The seemingly more specialized (but still open) question of whether or not $D$ can be infinite $k$-dimensional, $k$-affine, and also algebraic over $k$ is the well known Kurosh Problem for Division Algebras, first appearing in 1941 \cite{Kur}. (The reader is referred, e.g., to \cite{Smo} and \cite{Zel} for brief historical overviews.) 

Notice that there is no loss of generality in this discussion if we assume further that $D$ is finitely generated as a skew field over $k$; that is, there is a finite subset $S \subseteq D$ such that $D$ is the smallest division $k$-subalgebra of itself containing $S$. (We will use the terminology \emph{finitely generated skew field} to help distinguish our use of this concept from that of affine algebra.) 

The most important fundamental result on affineness of division algebras is Amitsur's famous proof \cite{Ami}, for uncountable $k$,  that if $D$ is not algebraic over $k$ then $D$ cannot be $k$-affine. 

Our aim in this paper, then, is to prove that, even when $k$ is countable, the skew fields in various wide classes of noncommutative transcendental division algebras, finitely generated over $k$ as skew fields, cannot be $k$-affine. To our knowledge, the analysis below is the first to provide verified examples of this type.  These examples are not new as skew fields, but their nonaffineness is newly verified. We also address the question of when the skew fields in these classes can be affine over their centers.

To compare the above with the commutative case, recall that the weak Nullstellensatz (Zariski's Lemma) \cite[Statement H$^n_3$]{Zar} ensures that a (commutative) field extension $K$ of $k$ is affine as a $k$-algebra if and only if $K$ is finite dimensional over $k$, if and only if $K$ is affine and algebraic over $k$. In particular, the rational function field $k(x)$, for any choice of field $k$, is not $k$-affine.

We note that questions of affineness over a base field versus affineness over a center are closely related, due to the following: If a $k$-division algebra $D$ is affine over $k$ and finite dimensional over $Z(D)$, then $D$ is finite dimensional over $k$.  Namely, under these conditions the Artin-Tate Lemma (e.g., \cite[Lemma 13.9.10]{McC-Rob}) implies that $Z(D)$ is affine over $k$. The weak Nullstellensatz then ensures that $Z(D)$ is finite dimensional over $k$, and therefore $D$ is finite dimensional over $k$.  Consequently, if $\mathcal D$ is a class of division algebras for which it is known that (affine over the center $\implies$ finite dimensional over the center), then the implication (affine over $k$ $\implies$ finite dimensional over $k$) holds within $\mathcal D$.

\begin{organization}
Our study below is composed of three parts. In \S 2, we show that the quotient division ring of a hereditary noetherian domain cannot be affine if the domain has infinitely many distinct isomorphism classes of simple modules. In \S3 we show that the theoretical results of \S2 apply to suitably conditioned iterated Ore extensions (skew polynomial rings). In \S 4, the work of \S 3 is then applied to specific examples arising in Lie theory and quantum groups.  In particular:

\begin{thmA*} {\rm[Theorem \ref{Dnk.nonaff}]}
Let $D_n(k)$ be the Weyl skew field $\Fract A_n(k)$ for a field $k$ and $n\in\Zpos$.  Then $D_n(k)$ is affine over its center if and only if $D_n(k)$ is finite dimensional over its center, if and only if $\ch k > 0$.  However, $D_n(k)$ is never affine over $k$.
\end{thmA*}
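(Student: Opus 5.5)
We split Theorem A into the characteristic-zero case, the positive-characteristic case, and the final assertion about affineness over $k$. For the final assertion we use the principle from \S2: the quotient division ring of a hereditary noetherian domain with infinitely many isomorphism classes of simple modules is not affine. We reach $D_n(k)$ by an intermediate hereditary ring. Write $A_n(k)=k\langle x_1,y_1,\dots,x_n,y_n\rangle$ and set $K=k(x_1,y_1,\dots,x_{n-1},y_{n-1},x_n)$. Here $K$ is the commutative-in-the-last-variable part, i.e.\ the division ring $D_{n-1}(k)(x_n)$. Then $D_n(k)=\Fract K[y_n;\delta]$, where $\delta=\partial/\partial x_n$ extends to a derivation of $K$. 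The ring $B=K[y_n;\delta]$ is a skew polynomial ring over a division ring, hence a principal ideal domain and in particular hereditary noetherian.

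Assume first that $\ch k=0$. To apply \S2 (or its refinement in \S3 for iterated Ore extensions), we need infinitely many nonisomorphic simple $B$-modules. We try the modules $B/B(y_n-a)$ for $a\in K$, or more concretely $a=\lambda x_n^{-1}$ or $a=\lambda$ with $\lambda\in k$. The modules $B/B(y_n-a)$ and $B/B(y_n-b)$ are isomorphic exactly when $b-a$ is a logarithmic derivative $\delta(u)u^{-1}$ up to conjugation. The required invariant is that $\lambda x_n^{-1}$ is a logarithmic derivative only when $\lambda\in\ZZ$. This gives infinitely many classes whenever $k$ is infinite. If $k$ is finite we use instead $a=x_n^{-m}$ for $m\ge 2$, distinguishing them by pole order. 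This step, verifying the hypotheses of the \S2/\S3 criterion, is the main technical obstacle. It is presumably packaged in \S3 as a condition on the iterated Ore extension, which we would check directly. In this case $Z(D_n(k))=k$, so affine over the center means affine over $k$, which is impossible. Moreover $D_n(k)$ is infinite dimensional over $k$, because it contains $k(x_1)$. So in characteristic zero none of the three conditions holds.

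Now assume $\ch k=p>0$. The center of $A_n(k)$ is $k[x_i^p,y_i^p]$, and $A_n(k)$ is a free module of rank $p^{2n}$ over it. Consequently $D_n(k)$ has center $Z=k(x_i^p,y_i^p)$ and $\dim_Z D_n(k)=p^{2n}$. It is then affine over $Z$, generated by the $x_i,y_i$ together with a finite basis. Combining the two cases gives both equivalences in Theorem A.

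For non-affineness over $k$ in positive characteristic, suppose $D_n(k)$ were affine over $k$. Since it is finite dimensional over its center, the Artin--Tate/Nullstellensatz argument of the introduction forces $\dim_k D_n(k)<\infty$. This contradicts the fact that $D_n(k)$ contains $k(x_1)$. Together with the characteristic-zero case, this shows that $D_n(k)$ is never affine over $k$.
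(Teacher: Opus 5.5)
Your proposal is correct, but both of your non-affineness arguments differ from the paper's.

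\textbf{Characteristic $0$.} The paper simply invokes Theorem~\ref{Q.itOre.nonaff}. It uses the constant shifts $R/(y_n-\alpha)R$ with $\alpha\in k$, and controls which of these are isomorphic via Bavula's theorem: the $\delta$-eigenvalues on $D_{n-1}(k)(x_n)$ form a finitely generated subgroup of $(k,+)$. You instead use the central shifts $\lambda x_n^{-1}$ and a residue invariant. This is self-contained and avoids Bavula.

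You should, however, supply the verification you flagged as ``the main obstacle,'' since you only asserted it. Embed $K$ in $D_{n-1}(k)((x_n))$ with $x_n$ central, and write $u=\sum_{i\ge m}u_ix_n^i$ with $u_m\neq 0$. Comparing coefficients in $\delta(u)=c\,x_n^{-1}u$ (with $c\in k$) gives $iu_i=cu_i$ for all $i$, hence $c=m\in\ZZ$. Since $x_n^{-1}$ is central in $K$, conjugation plays no role, and $k/\ZZ\supseteq\QQ/\ZZ$ is infinite. Two small points: your aside about finite $k$ is vacuous in characteristic $0$, and since \S2 is phrased for right modules you should use $B/(y_n-a)B$ or pass to $B^{\op}$.

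\textbf{Characteristic $p$.} For infinite $k$ the paper again uses Theorem~\ref{Q.itOre.nonaff}. For finite $k$ it takes irreducibles $f_i\in k[y_n^p]$, forms the pairwise comaximal ideals $Rf_i$, and so gets infinitely many simple modules over $D_{n-1}(k)(x_n)[y_n;d/dx_n]$. Your Artin--Tate plus Zariski argument is exactly the observation recorded in the paper's introduction. It is shorter and handles finite and infinite $k$ uniformly. The paper's route buys more: Theorem~\ref{inf.simples>inf.gen} shows $D_n(k)$ is not even generated as a ring by that PID together with finitely many elements.
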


\begin{thmB*} {\rm[Theorems \ref{Q.itOre.nonaff},  \ref{aff.Q.qaffine}]}

Let $R = \Obfq(k^n)$ be a quantum affine space over a field $k$ for $n > 0$.  Then $\Fract R$ is affine over its center if and only if $\Fract R$ is finite dimensional over its center, if and only if all entries of $\bfq$ are roots of unity.  However, $\Fract R$ is never affine over $k$.
\end{thmB*}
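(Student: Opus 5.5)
Theorem B has two halves, and I will handle them with different tools: first the affine-over-center equivalences, then the claim that $\Fract R$ is never affine over $k$.

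\textbf{The roots-of-unity case.} Suppose all $q_{ij}$ are roots of unity. Choose $\ell$ with $q_{ij}^\ell=1$ for all $i,j$. Then $x_1^\ell,\dots,x_n^\ell$ are central, and $R$ is a finitely generated module over the commutative polynomial ring $C=k[x_1^\ell,\dots,x_n^\ell]$. Inverting $C\setminus\{0\}$ gives $\Fract R$, which is finite dimensional over $\Fract C\subseteq Z(\Fract R)$. Hence $\Fract R$ is finite dimensional over its center.

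Any finite-dimensional division algebra is affine over its center, so this case gives both implications toward "affine over the center". This also settles the reverse direction of the first equivalence: affine over the center implies finite dimensional over the center, via the contrapositive treated next.

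\textbf{The non-root-of-unity case.} Now suppose some $q_{ij}$ is not a root of unity. I will show that $\Fract R$ is not affine over $Z(\Fract R)$; it is then also infinite dimensional over its center.

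The plan is to realize $\Fract R$ as the quotient division ring of a hereditary noetherian domain with infinitely many isomorphism classes of simple modules, and then invoke the \S2 result. This should go through the iterated Ore extension framework of \S3.

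1. Let $Z=Z(\Fract R)$. Its structure is known: $Z$ is generated by monomials $x^{m}$ with $m$ in the lattice $\{m : \prod_j q_{ij}^{m_j}=1 \ \forall i\}$.
2. Localize $R$ to the quantum torus $T$, and form the central localization $ZT$. Using a lattice basis change, $ZT$ becomes a quantum torus over the field $Z$ whose commutation parameters generate a torsion-free, "nondegenerate" group.
3. Peel off one generator $y$ such that $ZT=A[y^{\pm1};\tau]$. Here $A$ is a localization over $Z$ of the remaining variables, and $\tau$ rescales them by non-roots of unity.
4. Replace $A$ by its quotient division ring $K$, and consider $K[y;\tau]$. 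This is a principal ideal domain, hence hereditary noetherian, with $\Fract K[y;\tau]=\Fract R$.
5. Produce infinitely many simples. The modules $K[y;\tau]/K[y;\tau](y-\lambda)$, or modules obtained from irreducible polynomials, should fall into infinitely many isomorphism classes.

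The \S2 result would then give nonaffineness over $Z$. The trouble is that this is over $K$, not over the center. Affine over $Z$ must be ruled out, so I would instead apply the \S3 theorem, which presumably says: for a suitably conditioned iterated Ore extension over a field $F$ with $F$-affineness hypotheses, the quotient division ring is not $F$-affine. I would apply it with $F=Z$, after presenting $ZR$ as such an iterated Ore extension over $Z$.

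\textbf{The main obstacle.} I expect the hardest step to be checking the \S3 hypotheses for this presentation. In particular, I must verify that the hereditary noetherian domain reached at the last stage has infinitely many nonisomorphic simple modules. The difficulty is that $\tau$ may have finite orbits on some elements. I would try to choose the last variable $x_i$ so that some $q_{ij}$ is a non-root of unity. Then the $y-\lambda$ quotients for $\lambda$ in distinct $\tau$-orbits are pairwise nonisomorphic, and infinitely many orbits exist because $K$ is infinite.

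\textbf{Never affine over $k$.} This follows from the introduction's Artin–Tate argument combined with the previous two parts. If $\Fract R$ were affine over $k$, it would be affine over its center. Hence all $q_{ij}$ would be roots of unity and $\Fract R$ would be finite dimensional over its center. Artin–Tate together with the weak Nullstellensatz would then force $\dim_k\Fract R<\infty$. This is false, since $k[x_1]\subseteq R$ is infinite dimensional over $k$ and $n>0$.
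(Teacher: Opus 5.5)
\textbf{There is a genuine gap, in the non-root-of-unity case.} Your roots-of-unity case matches the paper. Your Artin--Tate deduction that $\Fract R$ is never affine over $k$ is also sound, and it even covers finite $k$, where Theorem~\ref{Q.itOre.nonaff} does not apply. But both the equivalence and that deduction depend on the non-root-of-unity case, and there the decisive step fails.

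You need infinitely many pairwise non-isomorphic simple modules over $K[y;\tau]$. Your criterion, that $\lambda,\mu$ in distinct $\tau$-orbits give non-isomorphic modules $K[y;\tau]/(y-\lambda)K[y;\tau]$, is false. By Lemma~\ref{iso.simples.vs.eigenvals}, the modules for $a,b\in K$ are isomorphic if and only if $b=\tau(v)^{-1}av$ for some nonzero $v\in K$. For $\lambda,\mu$ in a central $\tau$-fixed subfield, this says $\lambda\mu^{-1}$ is an eigenvalue of $\tau$ on $K$.

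Since $\tau$ rescales monomials, it has many eigenvalues. Take $n=2$, $K=k(x_1)$, $y=x_2$, so $\tau(x_1)=q^{-1}x_1$. The scalars $1,q,q^2,\dots$ are $\tau$-fixed, hence in distinct orbits, yet all give isomorphic simples; e.g.\ $q^{-1}x_1(y-q)=(y-1)x_1$. So the infinitude of $K$ proves nothing. What is needed is that the eigenvalue group $E_\tau$ has infinite index in $F^\times$ for an infinite central $\tau$-fixed field $F$. The paper gets this from Bavula's theorem (Theorem~\ref{Bav.fin.gen.eigen}): $E_\tau$ is finitely generated because $K\otimes K^{\op}$ is noetherian, $K$ being the fraction field of a quantum affine space. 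It then combines this with Lemma~\ref{kxk+.non.fingen}. Nothing in your proposal substitutes for this.

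\textbf{The scaffolding around this step also needs repair.} Your worry that \S2 gives non-affineness only over $K$ is a misreading. Theorem~\ref{inf.simples>inf.gen} says the quotient ring is not generated by the HNP ring plus finitely many elements. So once $Z:=Z(\Fract R)\subseteq K$, Corollary~\ref{main.nonaff} with base field $Z$ gives exactly your claim. Securing $Z\subseteq K$ needs two things: $Z(Q(T))=Q(Z(T))$ (Lemma~\ref{ZQ=QZ.qtorus}, which you take as known), and a surjection $\Gamma\to\ZZ$ whose kernel contains $\Gamma_Z$.

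The fallback you adopt instead, presenting $ZR$ as an iterated Ore extension over $Z$ to apply Theorem~\ref{Q.itOre.nonaff}, fails in general, and so does your step~2. Take $n=3$, $\ch k\ne 2$, $q_{12}=q$ not a root of unity, $q_{13}=-1$, $q_{23}=1$. Then $\Gamma_Z=2\ZZ e_3$, $Z=k(x_3^2)$ and $\Gamma/\Gamma_Z\cong\ZZ^2\times\ZZ/2\ZZ$. So $ZT$ is not a quantum torus over $Z$, and the commutation parameters include the torsion element $-1$ in any basis. Moreover $x_3\in ZR\setminus Z$ with $x_3^2\in Z$. A leading-term argument shows that any $a$ with $a^2\in Z$ in $Z[t_1][t_2;\tau_2,\delta_2]\cdots$ already lies in $Z$, so $ZR$ cannot have that form.

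\textbf{How to repair it.} Apply Bavula's theorem directly over the base field $Z$. This is legitimate because $\tau$ fixes $Z$, and $K\otimes_Z K^{\op}$ is a homomorphic image of $K\otimes_k K^{\op}$. With that, your original route (Corollary~\ref{main.nonaff} over $Z$) goes through. The paper instead uses Jordan's theorem to check that no nonzero power of $\tau$ is inner, and then invokes Theorem~\ref{PIDs.w.Q.nonaff/Z}(b).
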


Since the 1960s (e.g., \cite{Gel-Kir}), Weyl skew fields have played a significant role in the theory of enveloping algebras of finite dimensional Lie algebras. In particular, the well known Gel'fand-Kirillov conjecture hypothesized that if $\gfrak$ is an algebraic finite-dimensional Lie algebra over a field $k$ of characteristic zero, then $\Fract U(\gfrak) \cong D_n(K)$ for some $n\ge0$ and some rational function field $K$ (in finitely many variables) over $k$.  Although not true in general, this conjecture has been proved for algebraic solvable Lie algebras over algebraically closed fields, for $\mathfrak{sl}_n(k)$, and for $\mathfrak{gl}_n(k)$; see, e.g., \cite{Ale-O-VdB} for a survey. Weyl skew fields also appear in the theory of prime factor algebras of enveloping algebras; see for instance \cite[Chapter 14]{McC-Rob}.

A parallel quantum Gel'fand-Kirillov Conjecture posits that for generic $k$-al\-ge\-bras $A$ among quantized enveloping algebras and quantized coordinate rings, $\Fract A$ is isomorphic to the skew field of fractions of a quantum affine space over a rational function field (in finitely many variables) over $k$.  Moreover, when $A$ is a generic quantized coordinate ring and $P$ is a prime ideal of $A$, $\Fract A/P \cong \Fract \Obfq(K^n)$ for some $n$ and some field extension $K$ of $k$.  These conjectures have been verified widely; see \cite[\S II.10.4]{Bro-Goo} for an overview, and \cite[Section 6]{Cau} for skew fields associated with a broad class of iterated skew polynomial algebras covering many quantized coordinate rings and related algebras.

\end{organization}

\begin{notation} Throughout, $k$ denotes an arbitrary (com\-mu\-ta\-tive) field, not necessarily countable.  All rings in this paper are associative with unit.  A division algebra $D$ over $k$ is \emph{finite over $k$} if $D$ is finite dimensional over $k$. The Goldie quotient ring (semisimple ring of fractions) of a (right or left) semiprime Goldie ring $R$ is denoted $\Fract R$, or $Q(R)$ for short.

We write $R[y;\tau,\delta]$ and $R[y^{\pm 1};\tau]$ for skew polynomial and skew-Laurent polynomial rings, respectively, and $R(y;\tau,\delta)$ and $R(y^{\pm1};\tau)$ for their Goldie quotient rings.  For all skew polynomial rings in this paper, $\tau$ is assumed to be an automorphism.  When $R$ is an algebra over a field $k$, we assume that $\tau$ and $\delta$ are $k$-linear, so that $R[y;\tau,\delta]$, $R[y^{\pm1};\tau]$, $R(y;\tau,\delta)$ and $R(y^{\pm1};\tau)$ are also $k$-algebras.
\end{notation}

\section{Localizing Hereditary Domains}

We make use of the torsion-theoretic localizations of HNP (hereditary noetherian prime) rings developed in \cite{Go74}.  The key instances of HNP rings we require are (noncommutative) PIDs -- principal right and left ideal domains -- along with the first Weyl algebra $A_1(k)$ in characteristic zero.

Recall that if $R$ is an HNP ring and $I$ is an essential right ideal of $R$, then $R/I$ has finite length (e.g., \cite[Proposition 5.4.5]{McC-Rob}).  Given any family $X$ of simple right $R$-modules, let $\calS_X$ denote the collection of those essential right ideals $I$ of $R$ such that $R/I$ has no composition factors isomorphic to any of the modules in $X$, and set
\begin{equation*}
R_X := \{ s \in Q(R) \mid sI \subseteq R\ \text{for some}\ I \in \calS_X \}.
\end{equation*}

We first collect the required tools. 

\begin{theorem} \cite{Go74}
     \label{HNP.things}
Let $R$ be a non-artinian HNP ring and $\calW$ a set containing exactly one member of each isomorphism class of simple right $R$-modules.

{\rm(a)} Every ring $T$ between $R$ and $Q(R)$ is HNP, with $Q(T) = Q(R)$.

{\rm(b)} The assignment $X \mapsto R_X$ gives an order-reversing bijection between the set of subsets of $\calW$ and the set of rings between $R$ and $Q(R)$.

{\rm(c)} For $X \subseteq \calW$, the simple right $R_X$-modules are exactly those modules isomorphic to $A \otimes_R R_X$ for $A \in X$.
\end{theorem}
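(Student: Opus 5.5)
The statement is quoted from \cite{Go74}, so here is how I would reconstruct it. The central idea is to treat each $R_X$ as a localization of $R$ with respect to a Gabriel topology. Then I would show that, because $R$ is hereditary noetherian, every such localization is \emph{perfect}, and that every intermediate ring arises this way.

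First I would check that $\calS_X$ is a Gabriel (hereditary torsion) topology. Since $R$ is HNP, each $R/I$ with $I$ essential has finite length. Closure under finite intersections, under $(I:r)$, and the transitivity axiom then reduce to statements about composition factors of subquotients and extensions of finite-length modules, which are immediate. Hence $R_X$ is the ring of quotients $R_{\calS_X}$. Its torsion modules are the modules every finitely generated submodule of which has finite length, with no composition factor in $X$.

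The next step, which I expect to be the main obstacle, is to prove that $\calS_X$ is perfect. Concretely, I would show that $IR_X = R_X$ for every $I\in\calS_X$, and that $R_X$ is flat as a left $R$-module with $M\otimes_R R_X \cong M_{\calS_X}$. I would attack this by induction on the length of $R/I$, reducing to the case where $R/I$ is simple. In that case I would use the structure of simple modules over an HNP ring: $I$ is projective and $I^* I$ is either $R$ or a maximal idempotent-type ideal. The aim is to show that $I^{-1}=\{q\in Q(R): qI\subseteq R\}$ lies in $R_X$ and satisfies $I^{-1}I=R$ after localizing. Once perfectness is established, every right ideal $J$ of $R_X$ equals $(J\cap R)R_X$. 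Since $J\cap R$ is finitely generated projective over $R$, $J$ is finitely generated projective over $R_X$, so $R_X$ is HNP with $Q(R_X)=Q(R)$.

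Part (c) follows from perfectness. A simple $R_X$-module restricts to an $\calS_X$-torsionfree module $N$ with $N\cong N\otimes_R R_X$. Any simple $R$-submodule of $N$ must lie in $X$, because the simples outside $X$ are torsion. For $A\in X$, the module $A\otimes_R R_X=A_{\calS_X}$ is simple, since its $R$-submodules correspond to those of the torsionfree module $A$. Simples outside $X$ are killed by $-\otimes_R R_X$.

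For (b), injectivity follows from (c): $X$ is recovered from $R_X$ as the set of $A\in\calW$ with $A\otimes_R R_X\neq 0$. The map is order-reversing, since $X\subseteq Y$ gives $\calS_Y\subseteq\calS_X$. For surjectivity, given $R\subseteq T\subseteq Q(R)$, I would set $X=\{A\in\calW : A\otimes_R T\neq 0\}$ and show $T=R_X$.
\begin{itemize}
\item To get $T\subseteq R_X$: for $t\in T$, the right ideal $I=\{r\in R: tr\in R\}$ is essential, and $R/I\cong (tR+R)/R\subseteq T/R$. Its composition factors $A$ satisfy $A\otimes_R T=0$, using $T\otimes_R T\cong T$. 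That last isomorphism is where I would first prove $T$ is a flat epimorphic extension of $R$, again via hereditarity and perfect localization.
\item To get $R_X\subseteq T$: I would show $IT=T$ for each $I\in\calS_X$, by induction on length from the simple case.
\end{itemize}
Part (a) for arbitrary $T$ then follows from the preceding paragraph applied to $R_X=T$.
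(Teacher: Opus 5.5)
The paper gives no argument of its own here; it only cites [Go74]. Your overall architecture is sound: $\calS_X$ is a Gabriel topology, $R_X$ is a perfect localization, $X$ is recovered as $\{A\in\calW : A\otimes_R R_X\neq 0\}$, and an arbitrary overring $T$ is identified with $R_X$ for $X=\{A : A\otimes_R T\neq 0\}$. The gap is in the step you call the main obstacle, because your mechanism targets the wrong product. In $I^{-1}I$ the factor from $I^{-1}\subseteq R_X$ sits on the \emph{left}, so the trace ideal governs $R_XI$, not $IR_X$. In the idempotent branch of your dichotomy one has $I^{-1}I=I$, so ``$I^{-1}I=R$ after localizing'' either restates the goal $IR_X=R_X$ or asserts the false statement $R_XI=R_X$. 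For instance, take $R=\bigl(\begin{smallmatrix}\ZZ&\ZZ\\ p\ZZ&\ZZ\end{smallmatrix}\bigr)$, $I=\bigl(\begin{smallmatrix}p\ZZ&\ZZ\\ p\ZZ&\ZZ\end{smallmatrix}\bigr)$ (an idempotent ideal with $R/I\cong\FF_p$) and $X=\calW\setminus\{R/I\}$. Then $I^{-1}=R_X=M_2(\ZZ)$ and $I^{-1}I=R_XI=I\neq R_X$, yet $IR_X=R_X$.

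The missing idea is the dual basis lemma. Since $I$ is finitely generated projective and essential, each $f\in\operatorname{Hom}_R(I,R)$ is left multiplication by some $q\in I^{-1}$. Hence $y=\sum_i x_iq_iy$ for all $y\in I$, with $x_i\in I$ and $q_i\in I^{-1}$. Cancelling a regular element of $I$ gives $1=\sum_i x_iq_i\in II^{-1}\subseteq IR_X$. This yields $IR_X=R_X$ for every $I\in\calS_X$ at once, with no induction or case split. That alone makes $\calS_X$ perfect: every $R_X$-module is then $\calS_X$-torsionfree, hence closed, so $M\otimes_RR_X\cong M_{\calS_X}$.

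The flat-epimorphism property you invoke for an arbitrary $T$ is also immediate. Finitely generated one-sided $R$-submodules of $Q(R)$ are isomorphic to one-sided ideals, hence projective, so $T$ is flat on both sides. Then $T\otimes_RT\hookrightarrow T\otimes_RQ(R)\cong Q(R)$ shows that multiplication $T\otimes_RT\to T$ is an isomorphism.

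There are two smaller gaps. In (c) you assume a simple $R_X$-module $N=nR_X$ has a simple $R$-submodule. This needs $N$ to be $R$-torsion. That holds when $X\neq\emptyset$, because then $R_X\neq Q(R)$ is a non-artinian HNP ring. So $N\cong R_X/M$ with $M$ essential and containing a regular element, which forces $\operatorname{ann}_R(n)$ to be essential and $nR$ to have finite length. For $X=\emptyset$ one has $R_X=Q(R)$, which has a simple module while $X$ is empty. So (c) as literally stated fails there and must be read for $X\neq\emptyset$; your argument should notice this. Also, HNP includes left noetherian and left hereditary, and you only treat the right side. These follow by running the perfect-localization argument on the left. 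That is legitimate since $R_X$ is flat as a right $R$-module and $R\to R_X$ is an epimorphism.
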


\begin{proof}
\cite[Proposition 2 and Theorems 3, 5]{Go74}
\end{proof}

\begin{theorem}  \label{inf.simples>inf.gen}
Let $R$ be an HNP ring which has infinitely many isomorphism classes of simple right modules.  Then $Q(R)$ cannot be generated as a ring by $R$ together with finitely many elements of $Q(R)$.  In particular, $Q(R)$ is not affine as a $Z(R)$-algebra.
\end{theorem}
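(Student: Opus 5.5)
The plan is to use the lattice of intermediate rings from Theorem \ref{HNP.things}. First I would dispose of the artinian case. If $R$ is artinian, then $R = Q(R)$ is simple artinian and has only one simple module up to isomorphism, contradicting the hypothesis. So $R$ is a non-artinian HNP ring, and parts (a)--(c) of Theorem \ref{HNP.things} apply.

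Next, suppose for contradiction that $Q(R) = R\langle s_1,\dots,s_m\rangle$ for some $s_1,\dots,s_m\in Q(R)$. By the definition of $R_X$, each $s_j$ satisfies $s_j I_j \subseteq R$ for some essential right ideal $I_j$ of $R$. Each $R/I_j$ has finite length, so it has only finitely many composition factors. Let $Y\subseteq \calW$ be the finite set of representatives of all composition factors of $R/I_1,\dots,R/I_m$, and put $X = \calW\setminus Y$. Then each $I_j\in\calS_X$, so each $s_j\in R_X$. Since $R_X$ is a ring containing $R$ (part (b)), it contains the ring generated by $R$ and the $s_j$, so $R_X = Q(R)$.

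On the other hand, $Q(R) = R_{\emptyset}$. This holds because $\calS_\emptyset$ consists of all essential right ideals, and every element of $Q(R)$ multiplies some essential right ideal into $R$, since $Q(R)$ is a classical right quotient ring and we can clear denominators. Because $\calW$ is infinite and $Y$ is finite, $X\neq\emptyset$. The bijectivity in part (b) then gives $R_X\neq R_\emptyset = Q(R)$, which is a contradiction. As a cross-check, part (c) shows that $R_X$ has simple modules $A\otimes_R R_X$ with $A\in X$, and these are nonzero, whereas $Q(R)$ is simple artinian. So part (c) also shows $R_X \ne Q(R)$, provided one knows those tensor products are nonzero.

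For the final assertion, note that $Z(R)$ is central in $R$. If $Q(R)$ were generated as a $Z(R)$-algebra by finitely many elements, then $Q(R)$ would a fortiori be generated as a ring by $R$ together with those elements. This contradicts what was just proved. (Here $Z(R)\subseteq Z(Q(R))$, so the $Z(R)$-algebra structure makes sense.)

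The step I expect to need the most care is the identification $R_\emptyset = Q(R)$ and the claim that the finitely many $s_j$ all lie in a single $R_X$ with $X\neq\emptyset$. This requires checking that $\calS_X$ is closed under the relevant operations. In fact only membership of each individual $I_j$ in $\calS_X$ is needed, and that follows directly from the choice of $Y$.
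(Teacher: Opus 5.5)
Your proposal is correct and follows essentially the same argument as the paper: you trap the finitely many generators in $R_X$ with $X=\calW\setminus Y$ for a finite $Y$, then use injectivity of $X\mapsto R_X$ to conclude $R_X\neq Q(R)$. The only differences are cosmetic: you compare with $R_\emptyset=Q(R)$, which you justify correctly, where the paper compares with $R_Z$ for a proper subset $Z\subset X$; and you make explicit the reduction to the non-artinian case, which the paper leaves implicit.
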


\begin{proof}
Let $F \subseteq Q(R)$ be finite, $T$ the subring of $Q(R)$ generated by $R \cup F$, and $\calW$ as in Theorem \ref{HNP.things}.  For $f \in F$, the $R$-module $(R+fR)/R$ has finite length because it is isomorphic to $R/I$ where $I := \{ r \in R \mid fr \in R \}$ is an essential right ideal of $R$.  Thus, the collection $Y$ of those modules in $\calW$ which appear as composition factors in any of the $R$-modules $(R+fR)/R$ for $f\in F$ is finite.

Now let $X := \calW \setminus Y$.  Then $F \subseteq R_X$ and so $T \subseteq R_X$.  Since $\calW$ is infinite (by hypothesis) and $Y$ is finite, $X$ must be infinite.  Choose a proper subset $Z \subset X$.  By Theorem \ref{HNP.things}, $R_X \subsetneq R_Z \subseteq Q(R)$, and therefore $T \ne Q(R)$.
\end{proof}

\begin{cor}  \label{main.nonaff}
Let $R$ be an HNP ring which is an algebra over a field $k$.  If $R$ has infinitely many isomorphism classes of simple right modules, then $Q(R)$ is not affine as a $k$-algebra.
\qed\end{cor}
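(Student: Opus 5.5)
The plan is to derive Corollary \ref{main.nonaff} directly from Theorem \ref{inf.simples>inf.gen}, using the fact that any generating set of $Q(R)$ over $k$ also generates $Q(R)$ over $R$.

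First I will check that $k$ sits inside $R$ in the required way. Since $R$ is a $k$-algebra, $k$ maps into $Z(R)$ via $\lambda \mapsto \lambda\cdot 1$. The $k$-algebra structure on $Q(R)$ is the one extending that of $R$, so the image of $k$ in $Q(R)$ lies in $R$. (Central elements of $R$ remain central in $Q(R)$, because $Q(R)$ is a ring of fractions of $R$; this is only needed to see that $Q(R)$ is genuinely a $k$-algebra.)

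Now suppose, for a contradiction, that $Q(R)$ is affine over $k$, say generated as a $k$-algebra by a finite set $F \subseteq Q(R)$. Then $Q(R)$ is the set of $k$-linear combinations of monomials in $F$. Each such element lies in the subring generated by $R \cup F$, since the scalars from $k$ already lie in $R$. Hence $Q(R)$ is generated as a ring by $R$ together with the finitely many elements of $F$. Theorem \ref{inf.simples>inf.gen} rules this out, because $R$ is HNP with infinitely many isomorphism classes of simple right modules.

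No step here is a real obstacle. The only point needing care is the identification of $k$ with a subring of $R$: if $k \to R$ were not injective, $R$ would be the zero ring, which is excluded since $Q(R)$ is nonzero. This makes ``subring generated by $R \cup F$'' contain all $k$-linear combinations, so the reduction to Theorem \ref{inf.simples>inf.gen} goes through.
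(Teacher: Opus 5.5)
Your proposal is correct and is essentially the paper's own argument, which the paper leaves implicit with \qed: since the image of $k$ lies in $R$, a finite $k$-algebra generating set $F$ of $Q(R)$ would make $Q(R)$ the subring generated by $R\cup F$, contradicting Theorem \ref{inf.simples>inf.gen}. Your side remarks about injectivity of $k\to R$ and centrality in $Q(R)$ are harmless but not needed, since the subring generated by $R\cup F$ contains the image of $k$ either way.
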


We give two immediate applications. 

\begin{cor}  \label{D(z).nonaff} 
Let $D[z]$ be a polynomial ring over a division ring $D$ and $k := Z(D)$.  Then $D(z)$ is not affine over $k$.
\end{cor}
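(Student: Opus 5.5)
The plan is to apply Corollary \ref{main.nonaff} with $R := D[z]$, which is a $k$-algebra for $k = Z(D)$ since $k$ is central in $D[z]$. This requires three facts: $D[z]$ is an HNP ring, $Q(D[z]) = D(z)$, and $D[z]$ has infinitely many isomorphism classes of simple right modules.

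The first two facts are standard. The polynomial ring $D[z]$ over a division ring has a right and left division algorithm with respect to degree in $z$. Hence every one-sided ideal is principal, so $D[z]$ is a noncommutative PID. In particular it is a noetherian domain in which every one-sided ideal is projective (indeed free of rank at most one), so it is HNP. Being a noetherian domain, it is an Ore domain, and its Goldie quotient ring is the division ring $D(z)$ of rational functions, consistent with the Notation. Finally, $D[z]$ is not artinian, since $z^nD[z]$ is a strictly descending chain.

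The main step is producing infinitely many pairwise nonisomorphic simple right $D[z]$-modules. My first choice is to use central polynomials. For each monic irreducible polynomial $p \in k[z]$, the element $p$ is central in $D[z]$, so $pD[z]$ is a proper two-sided ideal. The factor $D[z]/pD[z]$ is nonzero and finite dimensional over $D$, so it has finite length and therefore a simple quotient $S_p$. On $S_p$ the central element $p$ acts as zero.

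Now take two distinct monic irreducibles $p \neq p'$ in $k[z]$. They are coprime, so $ap + bp' = 1$ for some $a, b \in k[z]$. If $S_p \cong S_{p'}$, then both $p$ and $p'$ would annihilate this module, and hence so would $1$, which is impossible. So the $S_p$ are pairwise nonisomorphic.

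It remains to see that $k[z]$ has infinitely many monic irreducibles. This is Euclid's argument: given finitely many, take an irreducible factor of their product plus $1$. This holds whether $k$ is finite or infinite, so the argument works over any center.

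With these three facts, Corollary \ref{main.nonaff} gives that $D(z)$ is not affine over $k$. I expect no serious obstacle. The only point needing care is that the simple modules are distinguished via central annihilators, which avoids any need to analyze similarity classes of irreducible elements of $D[z]$.
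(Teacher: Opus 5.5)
Your proof is correct and follows essentially the same route as the paper: take infinitely many pairwise nonassociate irreducibles $p \in k[z] \subseteq Z(D[z])$, pick a simple right module annihilated by each $p$, and apply Corollary \ref{main.nonaff}. The only difference is in how the simple modules are shown to be non-isomorphic. You use the B\'ezout relation $ap+bp'=1$ inside a common annihilator, whereas the paper cites that two-sided ideals of $D[z]$ are centrally generated to conclude that the ideals $Rf_i$ are distinct maximal ideals; your version is slightly more self-contained.
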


\begin{proof}
First, recall that the center of $R := D[z]$ is $k[z]$ and that every two-sided ideal of $R$ is generated by an element of $Z(R)$; see, e.g., \cite[Proposition 17.1]{Goo-War}. Next, let $f_1,f_2, \dots$ be infinitely many pairwise nonassociate, irreducible polynomials in $k[z]$. We see that the ideals $Rf_i$ must be pairwise distinct maximal ideals of $R$.  Choose a simple right $(R/Rf_i)$-module $A_i$ for each $i$; then $A_1,A_2,\dots$ are pairwise non-isomorphic simple right $R$-modules.  It therefore follows from Corollary \ref{main.nonaff} that $D(z) = Q(R)$ is not affine over $k$.
\end{proof}

A second application concerns the first Weyl skew field, $D_1(k) = \Fract A_1(k)$, over a field $k$. We deal with $D_n(k)$ for $n>1$ later (Theorem \ref{Dnk.nonaff}).

\begin{exam}  \label{D1k.nonaff}
If $k$ is a field then $D_1(k)$ is not affine over $k$.
\end{exam} 

\begin{proof} 
Assume first that $k$ has characteristic zero. Then the domain $A_1(k)$ is HNP, and by \cite[Proposition 5.6]{McR73}, $A_1(k)$ has infinitely many isomorphism classes of simple right modules. Thus the desired conclusion follows from Corollary \ref{main.nonaff}.

Now assume that $k$ has characteristic $p>0$.  Then $A_1(k)$ is not HNP.  Instead, view $D_1(k) = Q(R)$, where $R := k(x)[y;d/dx]$, a PID (and so HNP). Then $R$ is a finite rank free module over the central subalgebra $S := k(x^p)[y^p]$, a polynomial ring. Choose an infinite sequence of pairwise nonassociate, irreducible polynomials $f_1,f_2,\dots$ in $S$. Since the $f_i$ generate pairwise distinct maximal ideals of $S$, the ideals $Rf_i$ of $R$ are pairwise comaximal in $R$. Similarly to Corollary \ref{D(z).nonaff}, we can choose a simple right $(R/Rf_i)$-module for each $i$, yielding an infinite sequence of pairwise non-isomorphic simple right $R$-modules. The desired conclusion now follows from Corollary \ref{main.nonaff}.
\end{proof}

The converse of Corollary \ref{main.nonaff} does not hold in general, as follows.

\begin{exam}  \label{CozOso.nonaff}
Let $p$ be a prime, $K$ an algebraic closure of $k := \FF_p$, and $\tau$ the Frobenius automorphism of $K$.  The $k$-algebra $R := K[y^{\pm1};\tau]$ is a PID which has only one isomorphism class of simple right modules, but $Q(R)$ is not affine over $k$, which is its center.
\end{exam}

\begin{proof}
Cozzens \cite[Theorem 2.3]{Coz} and Osofsky \cite[Proposition 4]{Oso} showed that $R$ has only one isomorphism class of simple right modules. That $k = Z(Q(R))$ follows from, e.g., \cite[Exercise 6\,I]{Goo-War}. 

Let $\calF$ be the collection of finite subfields of $K$, an upward directed family whose union is $K$.  Each $F \in \calF$ is perfect, hence invariant under $\tau$, and so $R$ contains $T_F := F[y^{\pm1};\tau|_F]$ as a $k$-sub\-algebra.  These subalgebras form an upward directed family whose union is $R$, and $(Q(T_F))_{F\in\calF}$ is an upward directed family whose union is $Q(R)$.

If $Q(R)$ is affine over $k$, then $Q(R) = Q(T_L)$ for some $L \in \calF$.  Let $p^n := \card(L)$; then $\tau^n|_L = \Id_L$ and so $y^n$ centralizes $L$.  But then $y^n$ is central in $Q(T_L)$, contradicting the fact that $Z(Q(R)) = k$.
\end{proof}

\begin{rem} Again consider the algebra $R := K[y^{\pm 1};\tau]$ from Example \ref{CozOso.nonaff}, and choose any element $q \in Q(R) \setminus R$. It follows from Theorem \ref{HNP.things} that $Q(R)$ can be generated as a $k$-algebra by $R \cup \{q\}$. In particular, while $Q(R)$ is not affine over a central subfield, it does hold that $Q(R)$ can be generated as a ring by $q, y, y^{-1}$ and the (noncentral) subfield $K$. 
\end{rem}

In contrast to the above non-affineness results, we present one case in which affineness does extend. Recall that an algebra $R$ over a field $k$ is \emph{matrix algebraic} over $k$ provided all of the matrix algebras $M_n(R)$ are algebraic over $k$. The question posed by Jacobson in \cite[p.695]{Jac-1945} of whether or not algebraicity implies matrix algebraicity is still open in general; Amitsur gave a positive solution over uncountable fields in \cite{Ami}.

\begin{prop}
Let $D[z]$ be a polynomial ring over a division ring $D$.  If $D$ is affine and matrix-algebraic over its center $K$, then $D(z)$ is affine over its center $K(z)$.
\end{prop}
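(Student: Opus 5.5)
Suppose $D$ is generated as a $K$-algebra by $d_1,\dots,d_m$. The natural candidate generating set for $D(z)$ over $K(z)$ is $d_1,\dots,d_m$, together with $z$ if needed. Since $z$ is central, the $K(z)$-subalgebra $A$ they generate is exactly $D\otimes_K K(z) = D[z]S^{-1}$, where $S = K[z]\setminus\{0\}$; in other words, $A$ is the localization of $D[z]$ at the nonzero central polynomials. (I would first check that $Z(D(z)) = K(z)$, which follows from $Z(D[z]) = K[z]$ via the Ore localization.) The whole task is therefore to show that $D\otimes_K K(z)$ is already a division ring, i.e.\ that it equals $D(z)$.

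\textbf{Step 1.} It suffices to show that every nonzero $f\in D[z]$ has a nonzero left multiple in $K[z]$: if $gf = c\in K[z]\setminus\{0\}$, then $f^{-1} = c^{-1}g\in A$. Since $A$ contains $D[z]$ and is closed under these inverses, and every element of $D(z)$ has the form $f^{-1}h$, this gives $A = D(z)$.

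\textbf{Step 2.} This is the main step, where matrix-algebraicity enters. Write $f = \sum_{i=0}^n a_i z^i$ with $a_i\in D$. The key claim is that $f$ is algebraic over the field $K(z)$ inside $D\otimes_K K(z)$, and I see two routes to it.

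The first route uses a companion-type matrix. The left $D$-module $D[z]/D[z]f$ (or a suitable $D$-linear-algebra reformulation) has dimension $n$ over $D$, and right multiplication by $z$ on it is given by a companion matrix $C\in M_n(D)$. Because $M_n(D)$ is algebraic over $K$, $C$ satisfies some $p(C)=0$ with $0\ne p\in K[t]$. Translating back, $p(z)$ acts as zero on $D[z]/D[z]f$, which means $p(z)\in D[z]f$. Hence $p(z) = gf$ for some $g$, which is exactly the statement of Step 1.

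I expect the main obstacle to be making this translation rigorous with left/right conventions. In particular, the dimension count requires $a_n$ to be invertible, which is automatic because $D$ is a division ring, so we can normalize $f$ to be monic. The companion matrix then lives in $M_n(D)$ acting on $D^n$ viewed as the left $D$-module $D[z]/D[z]f$ with basis $1,z,\dots,z^{n-1}$. Right multiplication by $z$ is a left $D$-module endomorphism, so it is represented by a matrix in $M_n(D)$, and the corresponding map $K[t]\to \mathrm{End}_D(D[z]/D[z]f)\cong M_n(D)^{\mathrm{op}}$ sends $t$ to this matrix. Algebraicity of $M_n(D)^{\mathrm{op}}$ over $K$ follows from that of $M_n(D)$.

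The second route is a fallback: view $f$ as an element of $D\otimes_K K(z)$, note that this ring is a domain (it sits inside $D(z)$), and argue that $f$ is algebraic over $K(z)$ there. A nonzero algebraic element $f$ in a domain is invertible, because its minimal polynomial has nonzero constant term. The companion matrix argument is the concrete way to establish this algebraicity.

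\textbf{Step 3.} Conclude: $D(z)$ is generated over $K(z)$ by $d_1,\dots,d_m$, hence it is affine over its center. Notably, the affineness of $D$ is used only to supply finitely many generators, while matrix-algebraicity is what makes the localization at central polynomials already a skew field.
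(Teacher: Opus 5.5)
Your proof is correct, and its skeleton matches the paper's. Both proofs identify the $K(z)$-subalgebra of $D(z)$ generated by $D$ with $D\otimes_K K(z)$, show that this is already a division ring, and conclude that it is all of $D(z)$, so the generators of $D$ generate $D(z)$ over $K(z)$.

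What differs is how the two ingredients are supplied. The paper gets injectivity of $K(z)\otimes_K D\to D(z)$ from simplicity of $K(z)\otimes_K D$. It gets the division-ring property by citing \cite[Theorem 17.2]{Goo-War}. You get injectivity by localizing the domain $D[z]$ at the central regular set $K[z]\setminus\{0\}$, and you prove the division-ring property directly. For monic $f$ of degree $n$, the action of $z$ on the free left $D$-module $D[z]/D[z]f$ of rank $n$ is $D$-linear, hence lies in $\mathrm{End}_D(D^n)\cong M_n(D)^{\op}$. So some nonzero $p\in K[t]$ kills it, giving $0\neq p(z)\in D[z]f$. This is essentially the standard proof of the direction of the cited theorem that is needed. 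Your version is therefore self-contained, and it shows that elements of degree $n$ require only algebraicity of $M_n(D)$; the paper's version is shorter because it outsources exactly this step.

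Two small points.

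First, the remark that $Z(D(z))=K(z)$ follows from $Z(D[z])=K[z]$ ``via the Ore localization'' is too quick as stated. In general the center of a quotient division ring can be larger than the fraction field of the center. Here the claim is immediate once Step 2 is done. Every element of $D(z)$ then has the form $c^{-1}g$ with $0\neq c\in K[z]$ and $g\in D[z]$. If it is central, so is $g=c\,(c^{-1}g)$, which forces $g\in K[z]$. The paper likewise reads off the center as $\eta(Z(E))$ only after identifying $E$ with $D(z)$.

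Second, the companion-matrix argument does not show that $f$ is algebraic over $K(z)$. It shows directly that $f$ right-divides a nonzero central polynomial, which is exactly what Step 1 needs. So the framing of Step 2 around algebraicity of $f$, and the ``second route,'' can simply be dropped.
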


\begin{proof}
Set $E := K(z) \otimes_K D$, and note that $Z(E) = K(z) \otimes 1$. We know that $E$ is a simple ring, since $K(z)$ is simple and $D$ is central simple over $K$, and so the natural map $\eta : E \rightarrow D(z)$ must be injective. Therefore, $E$ is a domain. Also, $E$ and $\eta(E)$ are clearly $K(z)$-affine. Next, since $D$ is matrix-algebraic over $K$, it follows from \cite[Theorem 17.2]{Goo-War} that $E$ is a division algebra. But $\eta(E)$ is generated as a division algebra over $K$ by $D$ and $z$, and so $\eta(E)$ must equal $D(z)$. It follows that $Z(D(z)) = \eta(Z(E)) = K(z)$ and that $D(z) = \eta(E)$ is affine over $K(z)$.
\end{proof}

\begin{rem}
The results in this section can also be compared with the following purely polynomial-identity-theoretic observation: If $D[z]$ is a polynomial ring over a division ring $D$, then $D(z)$ is finite over its center if and only if $D$ is finite over its center.
(Proof: By Kaplansky's Theorem, see, e.g., \cite[Theorem 13.3.8]{McC-Rob}, a division algebra $D$ is PI if and only if it is finite over its center.  Also, $D(z)$ is PI if and only if $D[z]$ is PI, if and only if $D$ is PI.)
\end{rem}

\section{PIDs with infinitely many simple modules}

In order to apply Theorem \ref{inf.simples>inf.gen} and/or Corollary \ref{main.nonaff} to skew fields of the form $D(y;\tau,\delta)$, where $D$ is a division algebra, we would like to have infinite families of simple modules over  the PIDs $D[y;\tau,\delta]$.  This can be obtained from information about eigenvalues of $\tau$ or $\delta$, as follows.

\begin{lem}  \label{iso.simples.vs.eigenvals}
Let $R = D[y;\tau,\delta]$ where $D$ is a division algebra over a field $k$.

{\rm(a)} Let $a,b \in D$.  Then $R/(y-a)R \cong R/(y-b)R$ if and only if there is some nonzero $v\in D$ such that $av-\tau(v)b = \delta(v)$.

{\rm(b)} Assume that $\delta = 0$, and let $\alpha,\beta \in \kx$.  Then $R/(y-\alpha)R \cong R/(y-\beta)R$ if and only if $\alpha\beta^{-1}$ is an eigenvalue for $\tau$.

{\rm(c)} Assume that $\tau = \Id_D$, and let $\alpha,\beta \in k$. Then $R/(y-\alpha)R \cong R/(y-\beta)R$ if and only if $\alpha - \beta$ is an eigenvalue for $\delta$.
\end{lem}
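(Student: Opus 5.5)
Part (a) carries all the content; (b) and (c) are specializations of it. The basic tool is a description of module maps out of a cyclic module. For $R = D[y;\tau,\delta]$ and any $c\in D$, the module $M_c := R/(y-c)R$ has $D$-basis the image $\bar 1$ of $1$. Indeed, right division by the monic linear polynomial $y-c$ works because $\tau$ is an automorphism, so every element of $R$ is congruent modulo $(y-c)R$ to a unique element of $D$. Hence $M_c\cong D$ as a left $D$-module, and it is simple as a right $R$-module: any nonzero submodule contains some $\bar d$ with $d\ne0$, and multiplying by $d^{-1}$ gives $\bar 1$.

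It follows that $M_a\cong M_b$ if and only if there is a nonzero homomorphism $\phi: M_a\to M_b$, since both modules are simple. Such a $\phi$ is determined by $\phi(\bar1)=\bar v$ for some $v\in D$. The only condition is that $\bar v$ be killed by $y-a$, i.e.\ $v(y-a)\in (y-b)R$.

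To compute, use $vy = y\tau^{-1}(v) - \delta(\tau^{-1}(v))$, or, working on the right, write $v y-va$ modulo $(y-b)R$. I will fix the convention $ys = \tau(s)y+\delta(s)$ from the notation of the paper. I want to reduce $v(y-a)$ to the form $(y-b)q + r$ with $r\in D$; the requirement is then $r=0$. Substituting $w:=\tau^{-1}(v)$ and using $y w = \tau(w)y+\delta(w)$ gives $vy = yw - \delta(w)$. Then
\[
v(y-a) = (y-b)w + bw - \delta(w) - va ,
\]
so the condition is $bw - \delta(w) = \tau(w)a$. This does not quite match the stated form $av-\tau(v)b=\delta(v)$. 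I expect the main obstacle to be this bookkeeping: getting the sides and conventions right so that the roles of $a$ and $b$ come out as stated. That is harmless, because isomorphism is symmetric, so I can use $\phi^{-1}: M_b\to M_a$ instead. With $a,b$ swapped and $v$ renamed $w$, the condition reads $aw-\tau(w)b=\delta(w)$, which is exactly the stated form, with $w\neq0$ if and only if $v\ne0$. This proves (a).

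For (b), put $\delta=0$ and $a=\alpha$, $b=\beta$ central. Part (a) says $\alpha v=\tau(v)\beta$ for some $v\ne0$, i.e.\ $\tau(v)=\alpha\beta^{-1}v$, which says precisely that $\alpha\beta^{-1}$ is an eigenvalue of $\tau$. For (c), put $\tau=\Id$. The condition becomes $\alpha v - v\beta=\delta(v)$, i.e.\ $\delta(v)=(\alpha-\beta)v$ with $v\ne0$, since $\alpha,\beta$ are central; this says $\alpha-\beta$ is an eigenvalue of $\delta$.
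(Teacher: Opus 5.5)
Your proof is correct and is essentially the paper's argument: both reduce the isomorphism of these simple cyclic modules to finding a nonzero $v\in D$ whose image is annihilated by the other linear polynomial, and then compare coefficients. The paper uses the opposite direction of homomorphism, taking a nonzero element of $R/(y-a)R$ killed by $y-b$, writing $u(y-b)=(y-a)v$ and reading off $u=\tau(v)$, which gives the stated form directly without your symmetry swap. One small slip: $R/(y-c)R\cong D$ as a \emph{right} $D$-module, not a left one, and right multiplication by $d^{-1}$ is what your simplicity argument actually uses.
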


\begin{proof}
For any $c\in D$, the right $R$-module $R/(y-c)R$ is simple, being $1$-di\-men\-sional over $D$ on the right.

(a) If $R/(y-a)R \cong R/(y-b)R$, there is a nonzero element $x \in R/(y-a)R$ such that $x(y-b) = 0$.  We may write $x = u+(y-a)R$ for some nonzero $u \in D$, and then $u(y-b) = (y-a)v$ for some nonzero $v \in R$.  Since $u(y-b)$ has degree $1$, $v$ lies in $D$.  Comparing coefficients, we find that $u = \tau(v)$ and $- ub = \delta(v) - av$, whence $av-\tau(v)b = \delta(v)$.  The converse is obtained by reversing the above steps.

(b) In view of part (a), the isomorphism occurs if and only if there is some nonzero $v \in D$ such that $\alpha v = \tau(v) \beta$, that is, $\tau(v) = \alpha \beta^{-1} v$.

(c) In this case, the isomorphism occurs if and only if there is some nonzero $v \in D$ such that $\delta(v) = (\alpha-\beta)v$.
\end{proof}

\begin{cor}  \label{inf.noneigenvals>inf.simples}
Let $R = D[y;\tau,\delta]$ where $D$ is a division algebra over a field $k$.

{\rm(a)} Assume that $\delta = 0$, and let $E_\tau$ be the subgroup of $\kx$ consisting of the $\tau$-eigen\-values of the $\tau$-eigenvectors in $D$.  If $\kx/E_\tau$ is infinite, then $R$ has infinitely many isomorphism classes of simple right modules.

{\rm(b)} Assume that $\tau = \Id_D$, and let $E_\delta$ be the subgroup of $(k,+)$ consisting of the $\delta$-eigen\-values of the $\delta$-eigenvectors in $D$.  If $(k,+)/E_\delta$ is infinite, then $R$ has infinitely many isomorphism classes of simple right modules.
\end{cor}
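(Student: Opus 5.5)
The plan is to derive both parts directly from Lemma \ref{iso.simples.vs.eigenvals}, using the simple modules $R/(y-\alpha)R$ for scalars $\alpha$. As noted at the start of that lemma's proof, each such module is simple. So it suffices to produce infinitely many scalars whose associated modules are pairwise non-isomorphic.

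First, in part (a), I would check that $E_\tau$ really is a subgroup of $\kx$. Since $\tau$ is a $k$-linear automorphism of the division algebra $D$, eigenvalues of nonzero eigenvectors are nonzero, because $\tau(v)\neq 0$. If $\tau(u)=\lambda u$ and $\tau(v)=\mu v$, then $\tau(uv)=\lambda\mu\,uv$ with $uv\neq 0$ since $D$ is a domain. Also $\tau(u^{-1})=\lambda^{-1}u^{-1}$, and $1\in E_\tau$ because $\tau(1)=1$.

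Next, I would choose representatives $\alpha_1,\alpha_2,\dots\in\kx$ of infinitely many distinct cosets of $E_\tau$. For $i\neq j$ we have $\alpha_i\alpha_j^{-1}\notin E_\tau$, so this ratio is not an eigenvalue of $\tau$. By Lemma \ref{iso.simples.vs.eigenvals}(b), the modules $R/(y-\alpha_i)R$ are therefore pairwise non-isomorphic.

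Part (b) is the additive analogue. Here $E_\delta$ is an additive subgroup of $k$, which follows from the Leibniz rule. If $\delta(u)=\lambda u$ and $\delta(v)=\mu v$, then $\delta(uv)=\delta(u)v+u\delta(v)=(\lambda+\mu)uv$. Also $\delta(u^{-1})=-u^{-1}\delta(u)u^{-1}=-\lambda u^{-1}$, and $\delta(1)=0$ gives $0\in E_\delta$. Note that $\lambda$ is central, being in $k$, which is what lets these scalars move past $u$ and $v$.

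I would then pick representatives $\alpha_i\in k$ of infinitely many distinct cosets of $E_\delta$. For $i\neq j$, the difference $\alpha_i-\alpha_j$ is not an eigenvalue of $\delta$. Lemma \ref{iso.simples.vs.eigenvals}(c) then makes the modules $R/(y-\alpha_i)R$ pairwise non-isomorphic.

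There is no serious obstacle in either part. The only points needing care are the subgroup verifications, which use that $D$ is a domain and that $\tau$ is an automorphism.
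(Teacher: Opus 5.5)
Your proposal is correct and matches the paper's proof: pick representatives of infinitely many distinct cosets of $E_\tau$ (resp.\ $E_\delta$), and apply Lemma \ref{iso.simples.vs.eigenvals}(b) (resp.\ (c)) to conclude that the simple modules $R/(y-\alpha_i)R$ are pairwise non-isomorphic. Your explicit check that $E_\tau$ and $E_\delta$ are subgroups is a correct, harmless addition that the paper simply builds into the statement.
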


\begin{proof}
(a) If $\alpha,\beta \in \kx$ lie in different cosets of $E_\tau$ in $\kx$, then by Lemma \ref{iso.simples.vs.eigenvals}(b), the simple right $R$-modules $R/(y-\alpha)R$ and $R/(y-\beta)R$ are not isomorphic.

(b) This follows similarly from Lemma \ref{iso.simples.vs.eigenvals}(c).
\end{proof}

The following facts about additive and multiplicative groups of fields are seemingly well known, but we have found no references beyond discussion forums and blogs, so we include a short proof.

\begin{lem}  \label{kxk+.non.fingen}
If $k$ is an infinite field, then neither of the groups $\kx$ nor $(k,+)$ is finitely generated.
\qed\end{lem}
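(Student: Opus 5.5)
The plan is to treat $(k,+)$ and $\kx$ separately, splitting each by characteristic. The basic tool is the structure of finitely generated abelian groups: a finitely generated abelian group of finite exponent is finite, and a finitely generated torsion-free abelian group is free of finite rank.

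\emph{Additive group.} If $\ch k = p > 0$, then $(k,+)$ has exponent $p$. Were it finitely generated, it would be finite, contradicting that $k$ is infinite. If $\ch k = 0$, then $\QQ \subseteq k$, so $k$ is a $\QQ$-vector space and $(k,+)$ is divisible. A nonzero finitely generated abelian group is never divisible: it surjects onto $\ZZ$ or onto some $\ZZ/p\ZZ$, and neither of these is divisible (if it surjects onto a finite group $\ZZ/n\ZZ$, compose with a map onto $\ZZ/p\ZZ$ for a prime $p \mid n$). Since $k \neq 0$, $(k,+)$ is not finitely generated.

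\emph{Multiplicative group.} Suppose $\kx$ is generated by $g_1,\dots,g_m$. Let $k_0$ be the prime subfield, so $k = k_0(g_1,\dots,g_m)$ as a field. We have $k = \kx \cup \{0\}$, and every element of $\kx$ is a product of the $g_i^{\pm1}$. Hence $k$ equals the ring $k_0[g_1^{\pm1},\dots,g_m^{\pm1}]$, so $k$ is a finitely generated $k_0$-algebra. By the weak Nullstellensatz (Zariski's Lemma, already quoted in the introduction), $k$ is then finite dimensional over $k_0$.

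If $k_0 = \FF_p$, this makes $k$ finite, a contradiction.

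If $k_0 = \QQ$, then $k$ is a number field and $\kx \supseteq \QQ^\times$. A subgroup of a finitely generated abelian group is finitely generated, so $\QQ^\times$ would be finitely generated. But $\QQ^\times$ contains the infinitely many primes, which are multiplicatively independent by unique factorization. So $\QQ^\times$ contains a free abelian group of infinite rank, which cannot sit inside a finitely generated abelian group, since the torsion-free rank of a subgroup is bounded by that of the ambient group. This is a contradiction.

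The main point needing care is the reduction for $\kx$: passing from finite generation of $\kx$ to $k$ being affine over its prime field, so that the Nullstellensatz applies. That rests on the observation that the subring generated by the $g_i^{\pm1}$ already contains every nonzero element of $k$. The remaining steps are routine facts about abelian groups.
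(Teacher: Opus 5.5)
Your proof is correct, but it is organized differently from the paper's.

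The paper argues by contrapositive and funnels both groups through a single ring-theoretic obstruction in each characteristic:
\begin{itemize}
\item In characteristic $0$, finite generation of either $\kx$ or $(k,+)$ passes to the subgroup $\QQ^\times$ or $(\QQ,+)$. That would make $\QQ$ a finitely generated ring, which is refuted by a denominator argument: write the generators as $a_i/b$ and pick a prime not dividing $b$.
\item In characteristic $p$, finite generation of either group makes $k$ an affine $\FF_p$-algebra, and Zariski's Lemma then forces $k$ to be finite.
\end{itemize}

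You instead handle $(k,+)$ purely with abelian group theory: finite exponent in characteristic $p$, divisibility in characteristic $0$. So you need the Nullstellensatz only for $\kx$ in positive characteristic. You also dispose of $\QQ^\times$ via the multiplicative independence of the primes, rather than via ring generation of $\QQ$.

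The paper's route is shorter and more uniform, since one argument per characteristic covers both groups at once. Yours is more elementary for the additive group. Indeed, in characteristic $p$ the Nullstellensatz is not really needed for $(k,+)$, because additive generators already span $k$ over $\FF_p$.

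One small redundancy in your argument: in the characteristic-$0$ multiplicative case, the conclusion that $k$ is a number field is never used. Only the inclusion $\QQ^\times\subseteq\kx$ matters, so you could apply the Nullstellensatz only after splitting off the case $k_0=\FF_p$.
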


\begin{proof}
For the contrapositive, assume that $\kx$ or $(k,+)$ is finitely generated.

First suppose that $\ch k = 0$.  Since subgroups of finitely generated abelian groups are finitely generated, either $\QQ^\times$ or $(\QQ,+)$ must be finitely generated.  Consequently, $\QQ$ is finitely generated as a ring, say by $q_1,\dots,q_t$.  Write $q_i = a_i/b$ for some $a_i \in \ZZ$, $b \in \Zpos$, and choose a prime $p$ not dividing $b$.  Then $1/p = a/b^n$ for some $a \in \ZZ$, $n \in \Zpos$ and so $p \mid b^n$, which is impossible.

Thus $\ch k = p > 0$.  Our assumption implies that $k$ is finitely generated as an algebra over $\FF_p$.  The weak Nullstellensatz then says that $\dim_{\,\FF_p}(k)$ is finite, and therefore $k$ is finite.
\end{proof}

We'll also find the following result of Bavula to be useful: 

\begin{theorem}  \label{Bav.fin.gen.eigen}  \cite{Bav}
Let $D$ be a division algebra over a field $k$, and assume that the algebra $D\otimes_k D^{\op}$ is noetherian.

{\rm(a)} If $\tau$ is a $k$-algebra automorphism of $D$, then the set of eigenvalues of $\tau$ is a finitely generated subgroup of $\kx$.

{\rm(b)} If $\delta$ is a $k$-linear derivation of $D$, then the set of eigenvalues of $\delta$ is a finitely generated subgroup of $(k,+)$.
\end{theorem}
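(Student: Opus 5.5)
Theorem \ref{Bav.fin.gen.eigen} is cited from \cite{Bav}, so the paper will presumably just point to Bavula. If I had to prove it myself, I would first show that the eigenvalues form a subgroup, and then get finite generation from the noetherian hypothesis on $M := D\otimes_k D^{\op}$, which we regard as a left module over itself (equivalently, $D$-bimodules).

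\emph{Group structure.} For (a), suppose $\tau(u)=\alpha u$ and $\tau(v)=\beta v$ with $u,v\neq 0$. Then $\tau(uv)=\alpha\beta\, uv$ and $\tau(u^{-1})=\alpha^{-1}u^{-1}$, and $uv\neq 0$ because $D$ is a domain. So the eigenvalues $E_\tau$ form a subgroup of $\kx$. For (b), the Leibniz rule gives $\delta(uv)=(\alpha+\beta)uv$ and $\delta(u^{-1})=-\alpha u^{-1}$, so the eigenvalues $E_\delta$ form a subgroup of $(k,+)$.

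\emph{Finite generation in case (a).} I will show that every ascending chain of finitely generated subgroups of $E_\tau$ stabilizes; then $E_\tau$ is finitely generated. Let $G\subseteq E_\tau$ be a subgroup and set $D_G := \sum_{\alpha\in G} D_\alpha$, where $D_\alpha$ is the $\alpha$-eigenspace. Since $D_\alpha D_\beta\subseteq D_{\alpha\beta}$, $D_G$ is a subalgebra. Consider the left ideal
\[ L_G := \sum_{\alpha\in G}\ \sum_{u\in D_\alpha} M\,(u\otimes u^{-1} - 1\otimes 1) \]
of $M$. Equivalently, this is the kernel-type ideal measuring the relations $u\cdot x\cdot u^{-1}$ for $u\in D_\alpha$, $\alpha\in G$. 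The key claim is that $G\mapsto L_G$ is strictly increasing. Once that holds, the ACC on left ideals of $M$ gives the ACC on subgroups of $E_\tau$, and finite generation follows.

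To detect strict inclusion I would use the $M$-module $D$, with action $(a\otimes b)\cdot x = axb$. Elements of $L_G$ annihilate the subring $C_G$ of elements commuting with $D_G$. So it suffices to show that if $\gamma\notin G$, with eigenvector $w$, then $wxw^{-1}\neq x$ for some $x\in C_G$. This is the main obstacle. I expect that a direct separation argument will need linear independence of eigenvectors with distinct eigenvalues modulo $D_G$, a Dedekind/Artin-style argument.

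\emph{Alternative route, and case (b).} If the annihilator approach stalls, I would first try a more robust filtration. Consider the left $M$-submodules $N_G := \sum_{\alpha\in G} M\cdot D_\alpha$ of $D$, and prove that $D_\gamma\not\subseteq N_G$ for $\gamma\notin G$. Note that $\tau$ acts on $M$ via $\tau\otimes\tau$ and on $D$ compatibly, and $N_G$ is $\tau$-stable. One would then want $N_G\cap D_\gamma=0$ by an eigenvalue-decomposition argument, but $M$ itself mixes eigenvalues, so this too needs care. Here the noetherian hypothesis on $D\otimes_k D^{\op}$, which is stronger than $D$ being noetherian, is what must be exploited. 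In particular, $D\otimes_k D^{\op}$ being noetherian implies $D$ has finite "transcendence" in a suitable sense, bounding the rank of $E_\tau$. Case (b) runs in parallel with additive eigenvalues and $u\otimes u^{-1}$ replaced by the same multiplicative construction, since derivation eigenvectors still satisfy $D_\alpha D_\beta\subseteq D_{\alpha+\beta}$.
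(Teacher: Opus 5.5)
\textbf{There is a genuine gap.} Your whole argument rests on the claim that $G\mapsto L_G$ is strictly increasing, and you never establish it. Both detection mechanisms you suggest cannot work.

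\emph{Testing on the $M$-module $D$ fails.} It already fails for commutative $D$: take $D=k(x)$ and $\tau(x)=\lambda x$ with $\lambda$ not a root of unity. Each $u\otimes u^{-1}-1\otimes 1$ acts on $D$ as $x\mapsto uxu^{-1}-x=0$, so every $L_G$ annihilates $D$. Hence $D$ cannot distinguish $L_{\langle\lambda^2\rangle}$ from $L_{\langle\lambda\rangle}$, even though these left ideals differ. Your sufficient condition ``$wxw^{-1}\neq x$ for some $x\in C_G$'' is simply false here.

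\emph{The submodules $N_G$ are useless.} $D$ is a simple $D$-bimodule, since $M\cdot u=DuD=D$ for every $u\neq 0$. So $N_G=D$ for all $G$.

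\emph{A rank bound would not suffice.} Finite rank does not give finite generation (think of $\ZZ[1/2]$). Part (b) inherits the same gap. For the record, the paper gives no argument here; it just cites Bavula's Theorem 1.1.

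\textbf{The missing idea is to identify $L_G$ correctly.} In $M$ one has $(1\otimes u)(u\otimes u^{-1}-1\otimes 1)=u\otimes 1-1\otimes u$. Under $M\cong D\otimes_k D$ (left ideals $=$ sub-bimodules), $L_G$ is therefore the kernel of $D\otimes_k D\to D\otimes_{K_G}D$, where $K_G$ is the division subring generated by $\bigcup_{\alpha\in G}D_\alpha$. Because $D$ is free as a right $K_G$-module, $a\otimes 1=1\otimes a$ in $D\otimes_{K_G}D$ only for $a\in K_G$. Thus $K_G=\{a\in D : a\otimes 1-1\otimes a\in L_G\}$, and the right module to test on is $D\otimes_{K_G}D$, not $D$.

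Strictness then reduces to showing that $K_G$ contains no nonzero eigenvector with eigenvalue $\gamma\notin G$, which needs its own argument:
\begin{enumerate}
\item Write $D_\alpha=D^\tau u_\alpha$. Then $A_G=\bigoplus_{\alpha\in G}D_\alpha$ is a directed union of crossed products $D^\tau*G_0$ with $G_0\subseteq G$ finitely generated.
\item Each $D^\tau*G_0$ is an iterated skew-Laurent ring over $D^\tau*T$, where $T$ is the torsion part of $G_0$. The ring $D^\tau*T$ is a domain finite-dimensional over $D^\tau$, hence a division ring, so each $D^\tau*G_0$ is a noetherian domain.
\item Hence $A_G$ is right Ore and $K_G=\{as^{-1}: a,s\in A_G,\ s\neq 0\}$.
\item If $0\neq w\in D_\gamma$ and $ws=a$, then $ws$ has its homogeneous components in degrees $\gamma G$, while $a$ has them in degrees $G$. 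Directness of $\sum_\mu D_\mu$ forces $ws=0$, a contradiction.
\end{enumerate}
With these two steps your chain argument goes through, and (b) is the same with the $(k,+)$-grading. Without them the proposal is not a proof.
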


\begin{proof}
\cite[Theorem 1.1]{Bav}
\end{proof}

\begin{rem}  \label{some.DtensorDop.noeth}
The class of division algebras $D$ over a field $k$ for which $D\otimes_k D^{\op}$ is noetherian contains the skew fields of fractions of the following algebras:  iterated skew polynomial rings over $k$ (e.g., \cite[Theorem 17.25]{Goo-War}), enveloping algebras of finite dimensional Lie algebras over $k$ \cite[Corollary 2.5]{Bav}, rings of differential operators on smooth irreducible affine varieties over $k$ when $\ch k = 0$ \cite[Corollary 2.4]{Bav}.  This class also contains the \emph{stratiform} division algebras introduced by Schofield \cite{Scho} (c.f. \cite[Proposition 1.8]{Yek-Zha}).
\end{rem}

\begin{theorem}  \label{PIDs.w.Q.nonaff}
Let $R$ be either $D[y;\tau]$ or $D[y;\delta]$, where $D$ is a division algebra over an infinite field $k$.  If $D\otimes_k D^{\op}$ is noetherian, then $R$ has infinitely many isomorphism classes of simple right modules.  Consequently, $Q(R)$ is not affine over $k$.
\end{theorem}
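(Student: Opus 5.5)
The plan is to combine Bavula's finite generation result with the fact that $\kx$ and $(k,+)$ are not finitely generated, and then feed the outcome into Corollary \ref{inf.noneigenvals>inf.simples} and Corollary \ref{main.nonaff}.

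I treat the case $R = D[y;\tau]$ first. By Theorem \ref{Bav.fin.gen.eigen}(a), applicable because $D\otimes_k D^{\op}$ is noetherian, the set $E_\tau$ of eigenvalues of $\tau$ is a finitely generated subgroup of $\kx$. Since $k$ is infinite, Lemma \ref{kxk+.non.fingen} says $\kx$ is not finitely generated. I claim $\kx/E_\tau$ is infinite. If it were finite, then $\kx$ would be generated by a finite generating set of $E_\tau$ together with finitely many coset representatives. That would make $\kx$ finitely generated, a contradiction. Corollary \ref{inf.noneigenvals>inf.simples}(a) then gives infinitely many isomorphism classes of simple right $R$-modules.

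The case $R = D[y;\delta]$ is the same argument with $(k,+)$ in place of $\kx$. Here I use Theorem \ref{Bav.fin.gen.eigen}(b), the additive half of Lemma \ref{kxk+.non.fingen}, and Corollary \ref{inf.noneigenvals>inf.simples}(b).

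For the final assertion, note that $R$ is a skew polynomial ring over a division ring, so it is a PID and in particular an HNP ring. It is also a $k$-algebra, since $\tau$ and $\delta$ are $k$-linear. Corollary \ref{main.nonaff} therefore shows that $Q(R)$ is not affine over $k$.

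There is no real obstacle here. The only point needing care is the group-theoretic step: a finite-index extension of a finitely generated group is finitely generated. This is immediate for abelian groups.
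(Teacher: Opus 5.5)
Your proposal is correct and follows the paper's proof: Theorem~\ref{Bav.fin.gen.eigen} makes $E_\tau$ and $E_\delta$ finitely generated, Lemma~\ref{kxk+.non.fingen} then forces $k^\times/E_\tau$ and $(k,+)/E_\delta$ to be infinite, and Corollaries~\ref{inf.noneigenvals>inf.simples} and~\ref{main.nonaff} finish the argument. You also spell out two points the paper leaves implicit: the finite-index generation step, and the fact that $R$ is a PID (hence HNP) $k$-algebra.
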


\begin{proof}
The groups $E_\tau$ and $E_\delta$ of Corollary \ref{inf.noneigenvals>inf.simples} are finitely generated by Theorem \ref{Bav.fin.gen.eigen}.  In view of Lemma \ref{kxk+.non.fingen}, the quotients $\kx/E_\tau$ and $(k,+)/E_\delta$ are infinite, and therefore the theorem follows from Corollaries \ref{inf.noneigenvals>inf.simples} and \ref{main.nonaff}.
\end{proof}

\section{Applications}

\begin{theorem}  \label{PIDs.w.Q.nonaff/Z}
Let $D$ be a division algebra over an infinite field $k$, and assume that $D\otimes_k D^{\op}$ is noetherian.

{\rm(a)} Let $R = D[y;\delta]$.  If $\ch k = 0$ and the derivation $\delta$ is not inner, then $Q(R)$ is not affine over its center.

{\rm(b)} Let $R = D[y;\tau]$.  If no nonzero power of the automorphism $\tau$ is inner, then $Q(R)$ is not affine over its center.
\end{theorem}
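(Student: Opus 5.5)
The plan is to compute the center of $Q(R)$ under the stated hypotheses and then apply Corollary \ref{main.nonaff}. The key claim is that in both cases $Z(Q(R))$ lies in $D$, or more precisely in $Z(D)$. If $Z(Q(R))$ is a field $F$ contained in $D$, then $R$ is an $F$-algebra: $F$ is central in $Q(R)$, hence in $R$, and $F\subseteq D\subseteq R$. By Theorem \ref{PIDs.w.Q.nonaff}, $R$ has infinitely many isomorphism classes of simple right modules, and $R$ is a PID, hence HNP. Corollary \ref{main.nonaff}, applied with the field $F$, then shows that $Q(R)$ is not affine over $F=Z(Q(R))$. So everything reduces to showing $Z(Q(R))\subseteq D$.

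For (b), I would embed $Q(R)$ into the skew Laurent series division ring $D((y;\tau))$ and show that any central element lies in $D$. Take a central element $z=\sum_{i\ge n} a_i y^i$. Commuting with $y$ gives $\tau(a_i)=a_i$ for all $i$. Commuting with $d\in D$ gives $d a_i = a_i\tau^i(d)$ for all $d$. Hence for $i\ne 0$ with $a_i\ne 0$ we get $\tau^i(d)=a_i^{-1}da_i$, so $\tau^i$ is inner, which contradicts the hypothesis. Thus $z=a_0\in Z(D)$.

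For (a), I would use $D((y^{-1};\delta))$, the pseudo-differential operator ring, which is a division ring in characteristic $0$ containing $D(y;\delta)$. Write a central element as $z=\sum_{i\le n} a_i y^i$ with $a_n\ne 0$. Commuting with $d\in D$ and reading off the top coefficient gives $da_n=a_n d$, so $a_n\in Z(D)$. Suppose $n\ne 0$. The coefficient of $y^{n-1}$ in $zd-dz=0$ gives
\[
a_{n-1}d + n\,a_n\delta(d) - d\,a_{n-1} = 0 .
\]
Since $\ch k=0$ and $a_n$ is central and invertible, this gives $\delta(d) = (n a_n)^{-1}(d a_{n-1}-a_{n-1}d)$, so $\delta$ is the inner derivation $[\,\cdot\,,(na_n)^{-1}a_{n-1}]$ up to sign. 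That is a contradiction. Hence $n=0$.

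We can then subtract $a_0\in Z(D)$; but $a_0$ need not be central in $Q(R)$, since commuting with $y$ requires $\delta(a_0)=0$. So the argument should be organized as an induction on the leading term. If $n=0$, then $z-a_0$ has lower degree, and commuting $z$ with $y$ forces the relevant relations. More simply, commutation with $y$ gives $\delta(a_i)=0$ for all $i$, since the map $\sum a_iy^i\mapsto \sum \delta(a_i)y^i$ is $[y,\cdot\,]$. Hence $a_0$ is central in $Q(R)$, and $z-a_0$ is central with smaller degree. If it is nonzero, the argument above applies to it with a negative leading degree $n<0$; the identity still holds with $n\ne 0$, giving the same contradiction. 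Thus $z=a_0\in Z(D)$.

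The main obstacle is setting up the series embeddings and verifying the commutation formulas, in particular the coefficient $n a_n\delta(d)$ for negative $n$ in the pseudo-differential ring. These are standard computations.
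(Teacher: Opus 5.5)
Your proof is correct, but it differs from the paper's in both of its steps.

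\textbf{Computing the center.} The paper uses no series rings. It cites the simplicity of $D[y;\delta]$ (characteristic $0$, $\delta$ outer) and of $T=D[y^{\pm1};\tau]$ (no nonzero power of $\tau$ inner). From this it gets $Z(Q(R))=Z(R)$, resp.\ $Z(Q(R))=Z(T)$, via the nonzero two-sided ideal $\{r \mid zr\in R\}$. It then uses the units: those of $R$ lie in $D$, and those of $T$ are the monomials $uy^n$. This identifies the center exactly as $K=Z(D)\cap\ker\delta$, resp.\ $K=Z(D)^\tau$. Your embeddings into $D((y;\tau))$ and the pseudo-differential division ring $D((y^{-1};\delta))$ replace the cited simplicity criteria with a direct comparison of coefficients, which is essentially the computation behind those criteria. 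Your route is self-contained and needs only the inclusion $Z(Q(R))\subseteq D$. The price is setting up the series division rings, embedding $Q(R)$ in them, and using the first-order term $n\,\delta(d)\,y^{n-1}$ of $y^n d$ for negative $n$.

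\textbf{Deducing non-affineness over the center.} The paper reapplies Theorem~\ref{PIDs.w.Q.nonaff} with $k$ replaced by $K$. This requires noting that $D\otimes_K D^{\op}$ is a homomorphic image of $D\otimes_k D^{\op}$, and tacitly that $K\supseteq k$ is infinite. You instead observe that having infinitely many isomorphism classes of simple right modules does not depend on the base field. So you apply Theorem~\ref{PIDs.w.Q.nonaff} over $k$, and then Corollary~\ref{main.nonaff} directly over $F=Z(Q(R))\subseteq R$. This avoids the change of base field and is slightly cleaner.
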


\begin{proof}
(a) Set $K := Z(D)\cap \ker \delta$; then $K$ is a field, $D$ is a $K$-algebra and $\delta$ is $K$-linear.  Since $D \otimes_K D^{\op}$ is a homomorphic image of $D\otimes_k D^{\op}$, the algebra $D \otimes_K D^{\op}$ is noetherian.  Thus, Theorem \ref{PIDs.w.Q.nonaff} implies that $Q(R)$ is not affine over $K$.   It remains to show that $K = Z(Q(R))$.

Under the given hypotheses, $R$ is a simple ring (e.g., \cite[Proposition 2.1]{Goo-War}).  If $z \in Z(Q(R))$, then $I := \{ r \in R \mid zr \in R \}$ is a nonzero ideal of $R$, whence $I = R$ and so $z \in R$.  Thus, $Z(Q(R)) = Z(R)$.  In particular, $Z(R)$ is a field.  Since the invertible elements of $R$ are the nonzero elements of $D$, we obtain $Z(R) \subseteq D$, from which it is clear that $Z(R) = K$.

(b) Let $K$ be the fixed field $Z(D)^\tau$.  As in part (a), Theorem \ref{PIDs.w.Q.nonaff} implies that $Q(R)$ is not affine over $K$, so we just need to show that $Z(Q(R)) = K$.

Since the skew-Laurent algebra $T := D[y^{\pm1};\tau]$ is simple (e.g., \cite[Theorem 1.17]{Goo-War}), we find as above that $Z(Q(R)) = Z(Q(T)) = Z(T)$.  The invertible elements of $T$ have the form $t = uy^n$ for $u \in D\setminus \{0\}$ and $n \in \ZZ$.  Such an element $t$ commutes with $y$ if and only if $\tau(u) = u$, and $t$ commutes with the elements of $D$ if and only if $\tau^n(d) = u^{-1}du$ for all $d\in D$.  Thus $Z(T) = K$, as desired.
\end{proof}

\begin{theorem}  \label{Q.itOre.nonaff}
Let $R = k[x_1][x_2;\tau_2,\delta_2]\cdots [x_n;\tau_n, \delta_n]$ be an iterated skew polynomial algebra over an infinite field $k$, where $n\ge1$ and either $\delta_n = 0$ or $\tau_n = \Id$.  Then $Q(R)$ is not affine over $k$.
\end{theorem}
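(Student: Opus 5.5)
Write $R_{n-1} := k[x_1][x_2;\tau_2,\delta_2]\cdots[x_{n-1};\tau_{n-1},\delta_{n-1}]$, with $R_0 := k$ when $n=1$. The plan is to reduce to Theorem \ref{PIDs.w.Q.nonaff} by passing from $R$ to a skew polynomial ring over the division ring $D := Q(R_{n-1})$.

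First I need to know that $\tau_n$ and $\delta_n$ extend to $D$. Since $R_{n-1}$ is a noetherian domain (an iterated skew polynomial ring over a field), it is an Ore domain, so $D$ exists. The automorphism $\tau_n$ of $R_{n-1}$ extends uniquely to an automorphism of $D$. The $\tau_n$-derivation $\delta_n$ extends uniquely to a $\tau_n$-derivation of $D$, using the standard formula $\delta(s^{-1}) = -\tau(s)^{-1}\delta(s)s^{-1}$. Both extensions remain $k$-linear.

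Next, form $T := D[x_n;\tau_n,\delta_n]$. Then $R = R_{n-1}[x_n;\tau_n,\delta_n]\subseteq T$. The nonzero elements of $R_{n-1}$ form a right and left Ore set in $R$, and localizing $R$ at this set gives $T$. Since $R \subseteq T \subseteq Q(R)$, we get $Q(T) = Q(R)$.

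By hypothesis, either $\delta_n = 0$, in which case $T = D[x_n;\tau_n]$, or $\tau_n = \Id$, in which case $T = D[x_n;\delta_n]$.

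Now apply Remark \ref{some.DtensorDop.noeth}, which covers skew fields of fractions of iterated skew polynomial rings over $k$, via \cite[Theorem 17.25]{Goo-War}. This gives that $D\otimes_k D^{\op}$ is noetherian. The case $n=1$ is trivial here: $D = k$, and $k\otimes_k k = k$.

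Since $k$ is infinite, Theorem \ref{PIDs.w.Q.nonaff} now applies to $T$, and shows that $Q(T) = Q(R)$ is not affine over $k$.

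The only point needing care is the extension of $(\tau_n,\delta_n)$ to $D$ and the identification $Q(T)=Q(R)$. Both are standard facts about Ore localization of skew polynomial rings (e.g. \cite{Goo-War}), so I expect no real obstacle.
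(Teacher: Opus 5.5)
Your proof is correct and is essentially the paper's argument: you write $R = S[x_n;\tau_n,\delta_n]$ with $D = Q(S)$, so that $Q(R) = D(x_n;\tau_n,\delta_n)$, get noetherianity of $D\otimes_k D^{\op}$ from Remark~\ref{some.DtensorDop.noeth}, and apply Theorem~\ref{PIDs.w.Q.nonaff}. The extra details you supply are standard facts the paper leaves implicit, namely extending $\tau_n,\delta_n$ $k$-linearly to $D$, identifying $Q(T)=Q(R)$, and handling the trivial case $n=1$.
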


\begin{proof}
View $R = S[x_n;\tau_n, \delta_n]$ and $Q(R) = D(x_n;\tau_n, \delta_n)$ where
\begin{equation*}
S := k[x_1][x_2;\tau_2,\delta_2]\cdots [x_{n-1};\tau_{n-1}, \delta_{n-1}]
\end{equation*}
and $D := Q(S)$.  Since $D \otimes_k D^{\op}$ is noetherian (Remark \ref{some.DtensorDop.noeth}), the result follows from Theorem \ref{PIDs.w.Q.nonaff}.
\end{proof}

While Theorem \ref{Q.itOre.nonaff} immediately implies that the skew field of fractions of any quantum affine space $\Obfq(k^n)$ is not affine over $k$, that does not address the question of affineness over centers.  We deal with this shortly 

\begin{theorem}  \label{Dnk.nonaff}
Let $k$ be a field and $n \in \Zpos$.

{\rm(a)} If $\ch k = 0$, then $D_n(k)$ is not affine over its center, which is $k$.

{\rm(b)} If $\ch k > 0$, then $D_n(k)$ is finite over its center, but is not affine over $k$.
\end{theorem}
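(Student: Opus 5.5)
Write $D_n(k)=\Fract A_n(k)$ and view $A_n(k)$ as the iterated skew polynomial algebra
\[
k[x_1][y_1;\delta_1][x_2][y_2;\delta_2]\cdots[x_n][y_n;\delta_n],\qquad \delta_i=\partial/\partial x_i .
\]
Here the adjoined variables alternately carry zero derivation and $\tau=\Id$. Every step therefore has $\delta=0$ or $\tau=\Id$, and in particular the last one does.

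\textbf{Part (a), $\ch k=0$.} Here $k$ is infinite. Put $S:=A_{n-1}(k)[x_n]$ and $D:=Q(S)$, so that $D_n(k)=D(y_n;\delta_n)$. By Remark \ref{some.DtensorDop.noeth}, $D\otimes_k D^{\op}$ is noetherian. I would then apply Theorem \ref{PIDs.w.Q.nonaff/Z}(a), which requires showing that $\delta_n=\partial/\partial x_n$ is not inner on $D$.

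\emph{Step 1: $\delta_n$ is not inner.} Suppose $\delta_n=[d,-]$ for some $d\in D$. Then $[d,x_n]=1$, while $d$ commutes with $x_1,y_1,\dots,x_{n-1},y_{n-1}$, since $\delta_n$ kills these. Note that $D=D_{n-1}(k)(x_n)$, and the centralizer of $D_{n-1}(k)$ in it is $k(x_n)$. To justify this, write $D_{n-1}(k)(x_n)$ as the quotient field of $D_{n-1}(k)\otimes_k k(x_n)$ and use that $D_{n-1}(k)$ has center $k$ in characteristic $0$. Hence $d\in k(x_n)$, which commutes with $x_n$, a contradiction. The place needing care is this centralizer computation inside the quotient field. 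One way to handle it is to clear denominators: write $d=fg^{-1}$ with $g\in k[x_n]$ and compare coefficients.

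\emph{Step 2: the center is $k$.} Theorem \ref{PIDs.w.Q.nonaff/Z}(a) shows that $Q(R)$ is not affine over its center. Its proof identifies the center as $K=Z(D)\cap\ker\delta_n$. Here $Z(D)=k(x_n)$ by the same fact as in Step 1, and $\ker\delta_n\cap k(x_n)=k$ in characteristic zero. So the center is $k$, as the statement claims.

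\textbf{Part (b), $\ch k=p>0$.} First, $A_n(k)$ is a free module of rank $p^{2n}$ over the central polynomial subalgebra $C:=k[x_i^p,y_i^p]$. Localizing at $C\setminus\{0\}$ shows that $D_n(k)=A_n(k)\otimes_C\Fract C$, which is finite dimensional over $\Fract C$ and hence over its center.

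Second, to show $D_n(k)$ is not affine over $k$: if $k$ is infinite, Theorem \ref{Q.itOre.nonaff} applies directly, because the last Ore extension has $\tau_n=\Id$. If $k$ is finite, I would use the remark from the introduction instead. Suppose $D_n(k)$ were $k$-affine. Since it is finite over its center, Artin--Tate and the weak Nullstellensatz would make $D_n(k)$ finite dimensional over $k$. This is impossible, as $D_n(k)\supseteq k(x_1)$. The same argument in fact also covers infinite $k$.
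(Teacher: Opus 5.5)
Your argument is correct in outline, but it follows a different route from the paper in both parts.

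\textbf{Comparison of routes.} In (a), the paper quotes $Z(D_n(k))=k$ and applies Theorem~\ref{Q.itOre.nonaff}. You instead apply Theorem~\ref{PIDs.w.Q.nonaff/Z}(a) to $D(y_n;\delta_n)$ and read the center off its proof. Combined with induction on $n$ (base case $D=k(x_1)$), this actually proves the ``well-known'' fact $Z(D_n(k))=k$ rather than citing it. The price is that you must check non-innerness and compute $Z(D)$.

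In (b), the paper uses Theorem~\ref{Q.itOre.nonaff} for infinite $k$. For finite $k$ it gives a separate argument: it produces infinitely many simple modules over the PID $D_{n-1}(k)(x_n)[y_n;d/dx_n]$ and then applies Corollary~\ref{main.nonaff}. Your use of the Artin--Tate/Nullstellensatz observation from the introduction is valid for every $k$ of positive characteristic. It removes the case split and needs none of the HNP machinery, because once $D_n(k)$ is finite over its center, affineness over $k$ would force $\dim_k D_n(k)<\infty$. The paper's route illustrates its main technique; yours is shorter and more elementary.

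\textbf{Problem with Step~1.} As written, Step~1 is both unnecessary and wrongly justified. Since $x_n$ is central in $A_{n-1}(k)[x_n]$, it is central in $D$. So every inner derivation of $D$ kills $x_n$, while $\delta_n(x_n)=1$. That is the entire proof that $\delta_n$ is not inner. For the same reason, the centralizer of $D_{n-1}(k)$ in $D$ is just $Z(D)$.

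The denominator-clearing you suggest fails for general elements of $D$. For $n\ge 2$ the ring $D_{n-1}(k)\otimes_k k(x_n)$ is not a division ring; for example, $x_1-x_n$ is not invertible in it. Hence not every $d\in D$ can be written as $fg^{-1}$ with $g\in k[x_n]$.

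\textbf{Repairing Step~2.} You do need $Z(D)=k(x_n)$ in Step~2, and there your idea can be made rigorous. Take $z\in Z(D)$ and put $A:=D_{n-1}(k)[x_n]$. The set $\{s\in A \mid zs\in A\}$ is a nonzero two-sided ideal of $A$. It is therefore generated by some $g\in k[x_n]$, since ideals of $A$ are centrally generated and $Z(A)=k[x_n]$ by the induction hypothesis, as in the proof of Corollary~\ref{D(z).nonaff}. Then $f:=zg$ is central and lies in $A$, so $f\in k[x_n]$ and $z=fg^{-1}\in k(x_n)$. Alternatively, simply cite the standard fact that $Z(E(x))=Z(E)(x)$ for a division ring $E$.
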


\begin{proof}
(a) It is well known that $Z(D_n(k)) = k$ in characteristic zero (e.g., \cite[Exercise 17\,O]{Goo-War}).  The result thus follows from Theorem \ref{Q.itOre.nonaff}.

(b) In this case $A_n(k)$ is a finite rank free module over the central subalgebra $k[x_1^p,y_1^p,\dots,x_n^p,y_n^p]$, where $p := \ch k$, and the first conclusion follows.  The second conclusion follows from Theorem \ref{Q.itOre.nonaff} if $k$ is infinite, but it also holds when $k$ is finite, as follows.  View $D_n(k) = Q(R)$ where $R := D_{n-1}(k)(x_n)[y_n; d/dx_n]$.  Then $R$ is a free module over the central subalgebra $S := k[y_n^p]$.  As in Example \ref{D1k.nonaff}, $R$ has an infinite sequence of pairwise non-isomorphic simple right modules, and we obtain the conclusion from Corollary \ref{main.nonaff}.
\end{proof}

\begin{theorem}  \label{Q.Ug.Ore.nonaff}
Let $R$ be $U(\gfrak)[y;\tau]$ or $U(\gfrak)[y;\delta]$ where $\gfrak$ is a finite dimensional Lie algebra over an infinite field $k$.  Then $Q(R)$ is not affine over $k$.
\end{theorem}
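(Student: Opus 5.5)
The plan is to reduce to Theorem \ref{PIDs.w.Q.nonaff}, exactly as in the proof of Theorem \ref{Q.itOre.nonaff}. Set $D := Q(U(\gfrak))$. This is a division algebra because $U(\gfrak)$ is a noetherian domain (PBW), hence an Ore domain. The automorphism $\tau$, respectively the derivation $\delta$, of $U(\gfrak)$ extends uniquely to $D$. For $\tau$ this is the standard extension of injective endomorphisms to Ore localizations. For $\delta$ we use the formula $\delta(s^{-1}) = -s^{-1}\delta(s)s^{-1}$, i.e.\ the usual extension of derivations to Ore quotient rings.

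Once the extension is in place, $R$ is an Ore subring of $T := D[y;\tau]$, respectively $D[y;\delta]$, and $Q(R) = Q(T)$. To see this, we may clear denominators of coefficients on one side: each element of $T$ has the form $s^{-1}r$ with $s\in U(\gfrak)\setminus\{0\}$ and $r\in R$. This works because finitely many elements of $D$ admit a common left denominator, and $y s^{-1} = \tau(s)^{-1} y$, or $y s^{-1} = s^{-1}y - s^{-1}\delta(s)s^{-1}$. So $T \subseteq Q(R)$, and hence $Q(T) = Q(R)$.

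Next, by Remark \ref{some.DtensorDop.noeth} (Bavula, \cite[Corollary 2.5]{Bav}), $D\otimes_k D^{\op}$ is noetherian. Since $k$ is infinite, Theorem \ref{PIDs.w.Q.nonaff} applies to $T$. It shows that $T$ has infinitely many simple modules and that $Q(T) = Q(R)$ is not affine over $k$. This completes the argument.

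The only point needing care is the first step: checking that $\tau$, or $\delta$, extends to $D$ and that $Q(R)$ coincides with the quotient ring of the skew polynomial ring over $D$. Both are standard facts about Ore localization of skew polynomial rings (e.g.\ \cite{Goo-War}), so I expect no real obstacle.
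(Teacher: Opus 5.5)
Your proof is correct and follows the paper's approach. The paper's one-line proof cites Remark \ref{some.DtensorDop.noeth} and Theorem \ref{PIDs.w.Q.nonaff}, tacitly passing to $D = Q(U(\gfrak))$ and identifying $Q(R)$ with $Q(D[y;\tau])$ or $Q(D[y;\delta])$ as in the proof of Theorem \ref{Q.itOre.nonaff}; you have just made explicit the standard extension of $\tau$ and $\delta$ to $D$ and this identification of quotient rings.
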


\begin{proof}
Remark \ref{some.DtensorDop.noeth} and Theorem \ref{PIDs.w.Q.nonaff}.
\end{proof}

\begin{cor}  \label{Q.Ug.nonaff}
Let $\gfrak$ be a finite dimensional Lie algebra over an infinite field $k$.  If $\gfrak$ contains an ideal $\hfrak$ of codimension $1$, then $Q(U(\gfrak))$ is not affine over $k$.
\end{cor}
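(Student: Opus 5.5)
Since $\hfrak$ has codimension $1$ in $\gfrak$, choose $y \in \gfrak \setminus \hfrak$, so that $\gfrak = \hfrak \oplus ky$ as vector spaces. Because $\hfrak$ is an ideal, the map $\delta := \mathrm{ad}\, y = [y,-]$ sends $\hfrak$ into $\hfrak$. It is a $k$-linear derivation of the Lie algebra $\hfrak$, so it extends uniquely to a $k$-linear derivation of $U(\hfrak)$, which we still denote $\delta$.

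By the PBW theorem together with the universal property of enveloping algebras, there is an isomorphism $U(\gfrak) \cong U(\hfrak)[y;\delta]$. The map $\gfrak \to U(\hfrak)[y;\delta]$ sending $h \mapsto h$ and $y \mapsto y$ respects brackets, because $yh - hy = \delta(h) = [y,h]$ in the Ore extension. It therefore induces an algebra map $U(\gfrak) \to U(\hfrak)[y;\delta]$. This map carries a PBW basis (ordered bases of $\hfrak$ followed by $y$) onto the standard basis $\{u\,y^j\}$ of the Ore extension, so it is bijective. This is the standard presentation, and the only step requiring care is checking the bracket relations, which is routine.

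Now apply Theorem \ref{Q.Ug.Ore.nonaff} with $\hfrak$ in the role of $\gfrak$. This is legitimate because $\hfrak$ is itself a finite dimensional Lie algebra over the infinite field $k$. The theorem gives that $Q(U(\hfrak)[y;\delta])$ is not affine over $k$. Since $Q(U(\gfrak)) \cong Q(U(\hfrak)[y;\delta])$, the corollary follows.

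The main obstacle is merely bookkeeping: making sure that $\delta$ is genuinely $k$-linear, so that the Ore extension is a $k$-algebra as the conventions require, and that the identification of quotient rings is compatible with $k$. Both hold because $\mathrm{ad}\, y$ is $k$-linear.
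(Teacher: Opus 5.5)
Your proposal is correct and follows the paper's proof, which simply notes that $U(\gfrak) = U(\hfrak)[y;\delta]$ for $y \in \gfrak \setminus \hfrak$ and $\delta$ induced by $\mathrm{ad}\, y$, and then applies Theorem \ref{Q.Ug.Ore.nonaff} to $\hfrak$. You have additionally written out the routine PBW verification of that identification, which the paper leaves implicit.
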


\begin{proof}
Here $U(\gfrak) = U(\hfrak)[y;\delta]$ for suitable $y$ and $\delta$. 
\end{proof}

Corollary \ref{Q.Ug.nonaff} applies in particular when $\gfrak$ is completely solvable, but that situation is also covered directly by Theorem \ref{Q.itOre.nonaff}.

We now turn to quantum affine spaces $\Obfq(k^n)$ and quantum tori $\Obfq((k^\times)^n)$.  In this notation, $n \in \Zpos$ and $\bfq$ is a multiplicatively antisymmetric $n\times n$ matrix of nonzero scalars from $k$. 

While the following lemma may be well known, we are unaware of a suitable reference.

\begin{lem}  \label{ZQ=QZ.qtorus}
If $T = \Obfq((k^\times)^n)$ is a quantum torus over a field $k$, then $Z(Q(T))\linebreak = Q(Z(T))$.
\end{lem}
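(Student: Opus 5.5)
The plan is to use the $\ZZ^n$-grading of $T$ together with the fact that $T$ is a domain with an Ore set of homogeneous units. Write $T=\bigoplus_{m\in\ZZ^n} kx^m$, where $x^m:=x_1^{m_1}\cdots x_n^{m_n}$. Then $x^m x^{m'} = \lambda(m,m')x^{m'}x^m$ for a bicharacter $\lambda:\ZZ^n\times\ZZ^n\to\kx$ determined by $\bfq$. The center $Z(T)$ is spanned by the monomials $x^m$ with $m\in L:=\{m \mid \lambda(m,\cdot)\equiv 1\}$, so $Z(T)$ is a Laurent polynomial ring over $k$ on a basis of the lattice $L$. The inclusion $Q(Z(T))\subseteq Z(Q(T))$ is clear, because central elements of $T$ remain central after inverting nonzero elements. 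The work is in the reverse inclusion.

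For the reverse inclusion, take $z\in Z(Q(T))$ and write $z=ab^{-1}$ with $a,b\in T$ and $b\neq0$. I would first reduce to the case where the denominator can be chosen central. Consider the right ideal $I:=\{r\in T\mid zr\in T\}$. It is nonzero, and since $z$ is central, $zr\in T$ implies $z(sr)=s(zr)\in T$ for all $s\in T$. Hence $I$ is a nonzero two-sided ideal of $T$. The key structural input is the standard fact that every nonzero ideal of a quantum torus contains a nonzero central element; equivalently, ideals of $T$ are generated by their intersections with $Z(T)$. I would prove this by the usual minimal-support argument. Choose $0\ne c\in I$ with the fewest monomials in its support, and multiply on the left by a monomial unit so that $1$ lies in the support. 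For each $j$, the element $x_j c x_j^{-1} - c$ lies in $I$ and has strictly smaller support, since the coefficient of $1$ cancels. It therefore vanishes, so $c$ commutes with every $x_j$, and $c\in Z(T)$. Now set $a':=zc\in T$, so that $z=a'c^{-1}$.

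Next I would show $a'\in Z(T)$. Since $z$ and $c$ are both central in $Q(T)$, so is $a'=zc$. Hence $a'$ is a central element of $Q(T)$ lying in $T$, and therefore $a'\in Z(T)$. It follows that $z=a'c^{-1}\in Q(Z(T))$, which gives the reverse inclusion.

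I expect the main obstacle to be the step that every nonzero ideal meets the center nontrivially. The minimal-support argument must be written carefully: the conjugation $x_j\, x^m x_j^{-1}=\lambda(e_j,m)\,x^m$ rescales each monomial by a scalar and never introduces new monomials, which is exactly what makes the support strictly decrease. I would either cite this as a standard fact about quantum tori (cf.~\cite{Bro-Goo}) or include the short argument above. Everything else is formal.
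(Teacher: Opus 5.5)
Your proof is correct and follows the paper's argument. The paper also shows that $I=\{t\in T\mid zt\in T\}$ is a nonzero two-sided ideal, picks a nonzero $u\in I\cap Z(T)$, and writes $z=(zu)u^{-1}$ with $zu\in Z(T)$. The only difference is how the central element of $I$ is found: the paper cites graded-simplicity of $T$ and \cite[Proposition II.3.8]{Bro-Goo} (all ideals of $T$ are centrally generated), whereas your minimal-support argument proves directly the weaker fact that every nonzero ideal meets $Z(T)$, which is all that is needed.
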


\begin{proof}
Give $T$ its natural $\ZZ^n$-grading, and observe that $T$ is graded-simple.  Consequently, all ideals of $T$ are centrally generated \cite[Proposition II.3.8]{Bro-Goo}.

Given $z \in Z(Q(T))$, the set $I := \{ t \in T \mid zt \in T \}$ is a nonzero ideal of $T$.  Choose a nonzero element $u \in I \cap Z(T)$.  Then $zu \in Z(T)$ and so $z = (zu)u^{-1} \in Q(Z(T))$.  Therefore $Z(Q(T)) \subseteq Q(Z(T))$.  The reverse inclusion is clear.
\end{proof}

\begin{theorem}  \label{aff.Q.qaffine} 
Let $R = \Obfq(k^n)$ be a quantum affine space over a field $k$.

{\rm(a)} If all the entries $q_{ij}$ of $\bfq$ are roots of unity, then $Q(R)$ is finite over its center.

{\rm(b)} If at least one $q_{ij}$ is not a root of unity, then $Q(R)$ is not affine over its center.
\end{theorem}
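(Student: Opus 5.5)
Part (a) is standard. If every $q_{ij}$ is a root of unity, choose $\ell\in\Zpos$ with $q_{ij}^\ell=1$ for all $i,j$. Then each $x_i^\ell$ is central in $R$, since $x_i^\ell x_j = q_{ij}^{\ell} x_j x_i^\ell = x_j x_i^\ell$. Moreover, $R$ is a free module of finite rank $\ell^n$ over the central polynomial subalgebra $C:=k[x_1^\ell,\dots,x_n^\ell]$, with basis the monomials $x_1^{a_1}\cdots x_n^{a_n}$, $0\le a_i<\ell$. Inverting the nonzero elements of $C$ shows that $Q(R)=R\otimes_C Q(C)$ has dimension at most $\ell^n$ over $Q(C)\subseteq Z(Q(R))$. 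Hence $Q(R)$ is finite over its center (equivalently, $R$ is PI and Kaplansky's theorem applies).

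For (b), the plan is to reduce to Theorem \ref{PIDs.w.Q.nonaff/Z}(b) after a change of variables. Note that $Q(R)=Q(T)$ for the quantum torus $T=\Obfq((k^\times)^n)$. First I will identify the center. By Lemma \ref{ZQ=QZ.qtorus}, $Z(Q(T))=Q(Z(T))$. Moreover, $Z(T)$ is spanned by the monomials $x^a$ with $a$ in the subgroup $L=\{a\in\ZZ^n : \prod_j q_{ij}^{a_j}=1 \text{ for all } i\}$, so $Z(T)$ is the group algebra $kL$, a Laurent polynomial ring in $r=\operatorname{rank}L$ variables.

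Next I will choose a basis of $\ZZ^n$ adapted to the situation. The bicharacter $\chi(a,b)=\prod q_{ij}^{a_ib_j}$ on $\ZZ^n$ has radical $L$. Since some $q_{ij}$ is not a root of unity, there is a vector $b$ such that $\chi(\cdot,b)$ has infinite image, and I can arrange $b$ to be primitive. Choosing a $\ZZ$-basis $e_1,\dots,e_{n-1},b$ of $\ZZ^n$ rewrites $T$ as $T'[y^{\pm1};\tau]$, where $y=x^b$, $T'$ is the quantum torus on $x^{e_1},\dots,x^{e_{n-1}}$, and $\tau$ scales each $x^{e_i}$ by $\chi(b,e_i)^{\pm1}$. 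Setting $D=Q(T')$, we get $Q(R)=D(y;\tau)$, and $D\otimes_k D^{\op}$ is noetherian by Remark \ref{some.DtensorDop.noeth}. If $k$ is finite, every nonzero scalar is a root of unity, so $k$ is infinite here and Theorem \ref{PIDs.w.Q.nonaff/Z}(b) will apply once its hypothesis on $\tau$ is checked.

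The main obstacle is showing that no nonzero power $\tau^m$ is inner on $D$. Suppose $\tau^m(d)=u^{-1}du$ for all $d\in D$. Then $uy^m$ centralizes $D$. Since $uy^m$ commutes with $D$ and $y$ normalizes $D$, conjugation by $y$ carries $uy^m$ to an element $\tau(u)y^m$ that again centralizes $D$. This gives $\tau(u)u^{-1}\in Z(D)$, and the aim is to extract from this a monomial $x^c$ with $c\in T'$-degrees such that $\chi(mb+c,\cdot)$ is trivial on $e_1,\dots,e_{n-1}$. Making this precise needs a leading-term (or valuation) argument in the $\ZZ^{n-1}$-graded division ring $D$. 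I expect the cleaner route is to avoid computing inner automorphisms directly and instead choose $b$ more cleverly. The idea is to pick the basis so that $L\cap(\ZZ e_1+\dots+\ZZ e_{n-1})=L$ up to finite index, making $b$ a vector whose multiples $mb$ avoid $L+(\text{span of } e_i)$ in the relevant sense. If that also proves messy, I will fall back on directly redoing the argument of Theorem \ref{PIDs.w.Q.nonaff/Z}(b) with $K=Q(Z(T))$. That route runs as follows: apply Theorem \ref{PIDs.w.Q.nonaff} over the infinite field $K=Z(Q(R))$, after checking that $Q(R)$ is a localization of $T\otimes_{Z(T)}K$, which is again a skew polynomial extension $D_K[y;\tau]$ with $D_K\otimes_K D_K^{\op}$ noetherian.
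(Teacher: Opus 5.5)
Part (a) is fine and agrees with the paper. Part (b) has a genuine gap, and it lies in your choice of $b$. Requiring only that $b$ be primitive with $\chi(\cdot,b)$ of infinite image does not make the statement you call the main obstacle true.

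For example, take $n=3$ with $x_1x_2=qx_2x_1$, where $q$ is not a root of unity, and $x_3$ central. Then $L=\ZZ e_3$. Choose $b=e_1+e_3$ with complement $\ZZ e_1+\ZZ e_2$. Then $y=x_1x_3$, and $\tau=x_1(-)x_1^{-1}$ on $D=Q(T')$ is inner. Correspondingly, $x_3=x_1^{-1}y$ is central in $Q(R)$ but does not lie in $D$. So no leading-term argument can complete your first route as set up.

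Your fallback has the same defect. Writing $T\otimes_{Z(T)}K$ as a skew extension in the same variable $y$, over a coefficient ring that contains $K$, requires $Z(T)\subseteq T'$, and this fails in the example. Separately, you never get from $\tau(u)u^{-1}\in Z(D)$ to a contradiction. The paper handles that step with Jordan's theorem, which produces $u$ with $\tau(u)=u$ and $\tau^m=u^{-1}(-)u$, so that $uy^m$ is central.

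The missing idea is the paper's: choose the complement so that the span $\Gamma'$ of $e_1,\dots,e_{n-1}$ contains $L=\Gamma_Z$. This is always possible. $\Gamma/L$ (with $\Gamma=\ZZ^n$) is finitely generated and infinite, so it maps onto $\ZZ$. Let $\Gamma'$ be the kernel of $\Gamma\to\ZZ$ and let $b$ be a preimage of $1$. Your ``up to finite index'' condition is in fact equivalent to $L\subseteq\Gamma'$, because $\Gamma/\Gamma'\cong\ZZ$ is torsion-free. But you neither show it can be attained nor connect it to the rest of the argument.

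With this choice, $Z(T)\subseteq T'$, so Lemma \ref{ZQ=QZ.qtorus} gives $Z(Q(R))=Q(Z(T))\subseteq D$. The paper then uses Jordan's theorem to rule out inner powers of $\tau$ and invokes Theorem \ref{PIDs.w.Q.nonaff/Z}(b). Once $Z(Q(R))\subseteq D$ is known, however, your fallback also works and is actually shorter:
\begin{itemize}
\item $K=Q(Z(T))$ is an infinite central subfield of $D$ fixed by $\tau$;
\item $D\otimes_K D^{\op}$ is a homomorphic image of $D\otimes_k D^{\op}$, hence noetherian;
\item Theorem \ref{PIDs.w.Q.nonaff} applied over $K$ then shows that $Q(R)=Q(D[y;\tau])$ is not affine over $K=Z(Q(R))$, with no innerness check at all.
\end{itemize}
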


\begin{proof}
Present $R =  k\langle x_1,\dots,x_n \mid x_ix_j = q_{ij} x_jx_i \; \forall\; i,j \in [1,n] \rangle$.  Let $T := \Obfq((k^\times)^n)$; then $Q(R) = Q(T)$.
Set $\Gamma := \ZZ^n$ and $x^\gamma := x_1^{\gamma_1} \cdots x_n^{\gamma_n}$ for $\gamma = (\gamma_1,\dots,\gamma_n) \in \Gamma$; then $T = \bigoplus_{\gamma\in\Gamma} k x^\gamma$ and $x^\alpha x^\beta \in k^\times x^{\alpha+\beta}$ for all $\alpha,\beta \in \Gamma$.  Now $Z(T) = \bigoplus_{\gamma\in\Gamma_Z} k x^\gamma$ where $\Gamma_Z := \{ \gamma \in \Gamma \mid x^\gamma \in Z(T) \}$ is a subgroup of $\Gamma$. 

(a)  Assuming all $q_{ij}$ are roots of unity, there is a positive integer $r$ such that $q_{ij}^r = 1$ for all $i$, $j$, whence all $x_i^r$ are central in $R$.  Now $R$ is module-finite over the central subalgebra $Z' := k\langle x_1^r,\dots,x_n^r \rangle$, and so $Q(R)$ is finite over the central subfield $Q(Z')$.  Therefore $Q(R)$ is finite over its center.

(b) Assume that $q_{lm}$ is not a root of unity for some $l$, $m$.  In particular, this forces $k$ to be infinite.  Now $x_l^r$ fails to commute with $x_m$ for any nonzero integer $r$, and so if $e_l$ is the $l$-th standard basis vector in $\Gamma$, the coset $e_l + \Gamma_Z$ has infinite order in $\Gamma/\Gamma_Z$.  Thus, $\Gamma/\Gamma_Z$ is infinite.

Since $\Gamma$ is finitely generated, it follows that $\Gamma$ has a subgroup $\Gamma' \supseteq \Gamma_Z$ such that $\Gamma/\Gamma'$ is infinite cyclic, say with a generator $\beta+\Gamma'$.  Now $S := \bigoplus_{\gamma\in\Gamma'} k x^\gamma$ is a subalgebra of $T$, isomorphic to a quantum torus of rank $n-1$, and $T$ is a free left $S$-module with basis $(x^{n\beta} \mid n\in \ZZ)$.  Set $y := x^{\beta}$.  The inner automorphism $y(-)y^{-1}$ restricts to an automorphism $\tau$ of $S$, and $T = S[y^{\pm1};\tau]$.  Thus $Q(R) = Q(T) = Q(S[y;\tau]) = Q(E[y;\tau])$ where $E := Q(S)$.  By construction, $Z(T) \subseteq S \subseteq E$, and consequently $Z(Q(R)) \subseteq E$ in view of Lemma \ref{ZQ=QZ.qtorus}.

Since $E = Q(S')$ where $S'$ is the quantum affine space (over $k$) corresponding to $S$, the algebra $E \otimes_k E^{\op}$ is noetherian (Remark \ref{some.DtensorDop.noeth}).  

If some nonzero power of $\tau$ is inner on $E$, then  by \cite[Theorems 0,1]{Jor}, there exist a nonzero element $u \in E$ and a positive integer $n$ such that $\tau(u) = u$ and $\tau^n = u^{-1}(-)u$.  It follows that $uy^n \in Z(E[y^{\pm1};\tau]) \subseteq Z(Q(R))$, which is impossible because $Z(Q(R)) \subseteq E$. Therefore no nonzero power of $\tau$ is inner on $E$.

Theorem \ref{PIDs.w.Q.nonaff/Z} now implies that $Q(R)$ is not affine over its center.
\end{proof}

We conclude with the following general question: Given a $k$-division algebra $D$, equipped with a $k$-linear automorphism $\tau$ and $\tau$-derivation $\delta$, must $D(y;\tau,\delta)$ be nonaffine over $k$? 

\section*{Acknowledgement}  The authors thank the workshop \emph{Revisiting Fundamental Problems: Infinite Dimensional Division Algebras -- Algebraicity and Finiteness} at the Simons Laufer Mathematical Sciences Institute in December 2025, which motivated this work and where our collaboration on it began. They are grateful to the referee for thoughtful comments and suggestions.


\bibliographystyle{amsplain}

\end{document}